\newcommand{\HE}{Name of Handling Editor}
\newcommand{\DoS}{Month/Day/Year}
\newcommand{\DoA}{Month/Day/Year}
\newcommand{\CA}{Name of Corresponding Author}
\newcommand{\Names}{D.B. Janse van Rensburg, A.C.M. Ran, F. Theron and M. van Straaten}
\newcommand{\Title}{$m$th roots of $H$-selfadjoint matrices over the quaternions}
\newcommand{\I}{\mathsf{i}}
\newcommand{\J}{\mathsf{j}}
\newcommand{\K}{\mathsf{k}}
\begin{document}

\bibliographystyle{plain}

\setcounter{page}{1}

\thispagestyle{empty}

 \title{\Title\thanks{Received
 by the editors on \DoS.
 Accepted for publication on \DoA. 
 Handling Editor: \HE. Corresponding Author: \CA}}

\author{
D.B.~Janse~van~Rensburg\thanks{Department of Mathematics and Applied Mathematics,
Research Focus: Pure and Applied Analytics, North-West~University,
Private~Bag~X6001,
Potchefstroom~2520,
South Africa.
(dawie.jansevanrensburg@nwu.ac.za, frieda.theron@nwu.ac.za, madelein.vanstraaten@nwu.ac.za). Supported by a grant from DSI-NRF Centre of Excellence in Mathematical and Statistical Sciences (CoE-MaSS).}
\and
A.C.M.~Ran\thanks{Department of Mathematics, Faculty of Science, VU University Amsterdam, De Boelelaan
    1111, 1081 HV Amsterdam, The Netherlands
    and Research Focus: Pure and Applied Analytics, North-West~University,
Potchefstroom,
South Africa. (a.c.m.ran@vu.nl).}
\and 
F.~Theron\footnotemark[2]
\and
M.~van~Straaten\footnotemark[2]}

\markboth{\Names}{\Title}

\maketitle

\begin{abstract}
The complex matrix representation for a quaternion matrix is used in this paper to find necessary and sufficient conditions for the existence of an $H$-selfadjoint $m$th root of a given $H$-selfadjoint quaternion matrix. In the process, when such an $H$-selfadjoint $m$th root exists, its construction is also given.
\end{abstract}

\begin{keywords}
Quaternion matrices, Roots of matrices, Indefinite inner product,  $H$-selfadjoint matrices, Canonical forms
\end{keywords}
\begin{AMS}
15B33, 15A16, 47B50. 
\end{AMS}


\section{Introduction}
Denote the skew-field of real quaternions by $\mathbb{H}$. The basic theory of quaternion linear algebra can be found in various books and papers, see for example the book by Rodman, \cite{Rodman}, and \cite{Zhang,ZhangWei}. Let $m$ be any positive integer and let $H$ be a square quaternion matrix which is invertible and Hermitian. We focus on the class of selfadjoint matrices relative to the indefinite inner product generated by  $H$. In the general sense of the definition a square quaternion matrix $A$ is said to be $H$-selfadjoint if $HA=A^*H$.

If $B$ is a square $H$-selfadjoint matrix with quaternion entries, we seek to find necessary and sufficient conditions for the existence of an $H$-selfadjoint matrix $A$ such that $A^m=B$. This matrix $A$ is referred to as an $H$-selfadjoint $m$th root of the matrix $B$.

It is well known that there exists a complex matrix representation for a matrix with quaternion entries, that is, there exists an isomorphism $\omega_n$ between the real algebra of all $n\times n$ quaternion matrices and a subalgebra $\Omega_{2n}$ of the algebra of all $2n\times 2n$ complex matrices. This isomorphism $\omega_n$ maps an $n\times n$ quaternion matrix $A=A_1+\J A_2$, where $A_1,A_2\in\mathbb{C}^{n\times n}$, to a $2n\times 2n$ matrix
$$\begin{bmatrix}
A_1 & \bar{A}_2 \\
-A_2 & \bar{A}_1
\end{bmatrix}.$$
  This fact simplifies our problem. Therefore we find necessary and sufficient conditions for the existence of an $\tilde{H}$-selfadjoint $m$th root in $\Omega_{2n}$ of an $\tilde{H}$-selfadjoint matrix in $\Omega_{2n}$. 
  
Quaternion matrices in indefinite inner product spaces, and specifically different canonical forms, were studied in \cite{Alpay,Djokovic,Karow,Sergeichuk}. Not much research has been done on roots of quaternion matrices, although \cite{Jafari} does give a formula for obtaining $m$th roots of quaternion matrices of a particular form. On the other hand, in the complex case $H$-selfadjoint $m$th roots of $H$-selfadjoint matrices have been studied extensively. Necessary and sufficient conditions for the existence of such roots can be found in \cite{mthroots}. For the case of $H$-selfadjoint square roots and applications to polar decompositions of $H$-selfadjoint matrices, see \cite{BMRRR}, and for the case of square roots of $H$-nonnegative matrices, see \cite{CT}. We refer to the introduction of a previous paper \cite{mthroots}, for an overview of $m$th roots of matrices in general.

Note that in the process of finding an $H$-selfadjoint $m$th root $A\in\Omega_{2n}$ of $B\in\Omega_{2n}$, i.e.\ $A^m=B$, the functional analytic approach via the Cauchy integral
\[A:=\frac{1}{2\pi \I}\int_\Gamma \sqrt[m]{\lambda}(\lambda I-B)^{-1}\,d\lambda,\]
which can be found for example in \cite{Higham}, would be sufficient in the case where there are no eigenvalues of $B$ in $(-\infty, 0]$. However, in this paper we prefer to take a more direct approach also in the case where the eigenvalues do not lie on the negative real line.

\section{Preliminaries}
We recap the basic theory for quaternions and matrices with quaternion entries as found in the book by Leiba Rodman, \cite{Rodman}. Every element in $\mathbb{H}$ is of the form
\begin{equation*}
x=x_0+x_1\I+x_2\J+x_3\K,
\end{equation*}
where $x_0,x_1,x_2,x_3\in\mathbb{R}$ and the elements $\I,\J,\K$ satisfy the following formulas \begin{equation*}
\I^2=\J^2=\K^2=-1,\quad\I\J=-\J\I=\K,\quad\J\K=-\K\J=\I,\quad\K\I=-\I\K=\J.
\end{equation*}
 It is important to keep in mind the fact that multiplication in $\mathbb{H}$ is not commutative, i.e.\ in general $xy\neq yx$ for $x,y\in\mathbb{H}$. Let $\bar{x}=x_0-x_1\I-x_2\J-x_3\K$ denote the conjugate quaternion of $x$. For a quaternion matrix $A$, let $\bar{A}$ denote the matrix in which each entry is the conjugate of the corresponding entry in $A$.

Let $A$ be an $n\times n$ quaternion matrix, i.e.\ $A\in\mathbb{H}^{n\times n}$, and write $A$ as $A=A_1+\J A_2$ where $A_1,A_2\in\mathbb{C}^{n\times n}$. The map $\omega_n:\mathbb{H}^{n\times n}\rightarrow\mathbb{C}^{2n\times 2n}$ is defined by
\begin{equation*}\label{eqomega(A)}
\omega_n(A)=\begin{bmatrix}
A_1 & \bar{A}_2 \\
-A_2 & \bar{A}_1
\end{bmatrix}.
\end{equation*}
Then $\omega_n$ is an isomorphism of the real algebra $\mathbb{H}^{n\times n}$ onto the real unital subalgebra
\begin{equation*}
\Omega_{2n}:=\left\{\begin{bmatrix}
A_1 & \bar{A}_2 \\
-A_2 & \bar{A}_1
\end{bmatrix} \mid A_1,A_2\in\mathbb{C}^{n\times n}\right\}
\end{equation*}
of $\mathbb{C}^{2n\times 2n}$. The following properties can be found in \cite[Section~3.4]{Rodman} and \cite{Zhang}. Let $X,Y\in\mathbb{H}^{n\times n}$ and $s,t\in\mathbb{R}$ be arbitrary, then
\begin{enumerate}
\item[(i)] $\omega_n(I_n)=I_{2n}$;
\item[(ii)] $\omega_n(XY)=\omega_n(X)\omega_n(Y)$;
\item[(iii)] $\omega_n(sX+tY)=s\omega_n(X)+t\omega_n(Y)$;
\item[(iv)] $\omega_n(X^*)=(\omega_n(X))^*$;
\item[(v)] $\omega_n(X^{-1})=(\omega_n(X))^{-1}$ if $X$ is invertible.
\end{enumerate}

Note that the matrix $X^*\in\mathbb{H}^{n\times m}$ is obtained from $X\in\mathbb{H}^{m\times n}$ by replacing each entry with its conjugate quaternion and then taking the transpose. This isomorphism between $\mathbb{H}^{n\times n}$ and $\Omega_{2n}$ ensures that results for matrices in $\mathbb{H}^{n\times n}$ that are purely algebraic, are also true for matrices in the subalgebra $\Omega_{2n}$ since we can apply $\omega_n$, and vice versa as long as we stay within the subalgebra $\Omega_{2n}$. All definitions that follow could also be made with respect to matrices in  $\Omega_{2n}$.

An $n \times n$ quaternion matrix $A$ has left eigenvalues and right eigenvalues but since we only work with right eigenvalues and right eigenvectors, we will refer to them as \textit{eigenvalues} and \textit{eigenvectors}. 

\begin{definition}
A nonzero vector $v\in\mathbb{H}^n$ is called an \emph{eigenvector} of a matrix $A\in\mathbb{H}^{n\times n}$ corresponding to the \emph{eigenvalue} $\lambda\in\mathbb{H}$ if the equality $Av= v\lambda$ holds.
\end{definition}
The \textit{spectrum} of $A$, denoted by $\sigma(A)$, is the set of all eigenvalues of $A$. Note that $\sigma(A)$ is closed under similarity of quaternions, i.e., if $v$ is an eigenvector of $A$ corresponding to the eigenvalue $\lambda$, then $v\alpha$ is an eigenvector of $A$ corresponding to the eigenvalue $\alpha^{-1}\lambda\alpha$, for all nonzero $\alpha\in\mathbb{H}$. From \cite{Zhang} we see that $A$ has exactly $n$ eigenvalues which are complex numbers with nonnegative imaginary parts and the Jordan normal form of $A$ has exactly these numbers on the diagonal. Let $\mathbb{C}_+=\{a+\I b\mid a\in\mathbb{R},b>0\}$.

Let a single Jordan block of size $n\times n$ at the eigenvalue $\lambda$ be denoted by $J_n(\lambda)$. The $n\times n$ matrix with ones on the main anti-diagonal and zeros elsewhere,  called a standard involutary permutation (sip) matrix, is denoted by $Q_n$.

Recall the following definition from \cite{Shapiro}.
\begin{definition} 
Let $A$ be a square quaternion matrix with Jordan blocks $\bigoplus_{i=1}^r J_{n_i}(\lambda)$ at the eigenvalue $\lambda$ in its Jordan normal form  and assume that $n_1\geq n_2\geq n_3\geq\ldots\geq n_r>0$. The \emph{Segre characteristic} of $A$ corresponding to the eigenvalue $\lambda$ is defined as the sequence
\[n_1,n_2,n_3,\ldots,n_r,0,0,\ldots .\]
\end{definition}

We will use this definition mostly in the case where $\lambda$ is equal to zero unless indicated otherwise and therefore sometimes will simply use   \textit{Segre characteristic} to refer to the Segre characteristic of $A$ corresponding to the eigenvalue $0$. 
Note that the Jordan normal form of matrices in the subalgebra $\Omega_{2n}$ can be found from the Jordan normal form of matrices in $\mathbb{H}^{n\times n}$ since $\omega_n(J_n(\lambda))=J_n(\lambda)\oplus J_n(\bar{\lambda})$, for $\lambda\in\mathbb{C}$. Then it is easy to see the following result.

\begin{corollary}\label{CorDoubleNrinSegre}
If a nilpotent matrix $A$ is in $\Omega_{2n}$, then each number in the Segre characteristic of $A$ occurs twice. 
\end{corollary}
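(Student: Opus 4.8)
The plan is to transport the problem back to $\mathbb{H}^{n\times n}$ along the inverse of the isomorphism $\omega_n$ and then invoke the Jordan normal form there. Since $A\in\Omega_{2n}$, there is a unique $M\in\mathbb{H}^{n\times n}$ with $A=\omega_n(M)$. Using properties (i) and (ii) of $\omega_n$ we get $A^k=\omega_n(M^k)$ for every positive integer $k$, and since $\omega_n$ is injective, $A^k=0$ if and only if $M^k=0$. Hence $M$ is nilpotent whenever $A$ is.

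Next I would use the Jordan normal form of $M$ over the quaternions: because $M$ is nilpotent its only eigenvalue is $0$, so there is an invertible $S\in\mathbb{H}^{n\times n}$ with $S^{-1}MS=\bigoplus_{i=1}^{r}J_{n_i}(0)$, where $n_1\geq n_2\geq\cdots\geq n_r>0$ are precisely the nonzero entries of the Segre characteristic of $M$ at $0$. Applying $\omega_n$ and using properties (ii) and (v) gives $\bigl(\omega_n(S)\bigr)^{-1}A\,\omega_n(S)=\omega_n\!\bigl(\bigoplus_{i=1}^{r}J_{n_i}(0)\bigr)$. Since $\omega_n$ sends a block-diagonal quaternion matrix to the complex matrix that, after a fixed permutation of rows and columns, is the direct sum of the images of the individual blocks, and since $\omega_n(J_{n_i}(0))=J_{n_i}(0)\oplus J_{n_i}(0)$ (the displayed identity $\omega_n(J_n(\lambda))=J_n(\lambda)\oplus J_n(\bar\lambda)$ with $\lambda=0$), it follows that $A$ is similar to $\bigoplus_{i=1}^{r}\bigl(J_{n_i}(0)\oplus J_{n_i}(0)\bigr)$.

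By uniqueness of the Jordan normal form this is the Jordan normal form of $A$, so each Jordan block size occurring for $A$ at the eigenvalue $0$ appears with even multiplicity — exactly twice as often as it does for $M$ — which is the assertion. The only point needing a little care is the bookkeeping in the previous paragraph, namely verifying that $\omega_n$ of a direct sum of quaternion blocks is, up to a permutation similarity, the direct sum of the $\omega$'s of the blocks; this is immediate from the block form of $\omega_n$ applied to a matrix whose complex components $A_1,A_2$ are themselves block diagonal, so it is routine rather than a genuine obstacle.
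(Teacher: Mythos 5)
Your proof is correct and takes essentially the same route as the paper, which compresses the whole argument into the remark that $\omega_n(J_n(\lambda))=J_n(\lambda)\oplus J_n(\bar\lambda)$ (so $\omega_n(J_n(0))=J_n(0)\oplus J_n(0)$) and declares the corollary easy to see. You have merely filled in the routine details: pulling $A$ back to a nilpotent quaternion matrix, invoking its quaternionic Jordan form, and checking that $\omega_n$ turns direct sums into direct sums up to a permutation similarity.
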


This actually holds for the Segre characteristic corresponding to any real eigenvalue in the case where $A\in\Omega_{2n}$ has real numbers in its spectrum. However, only the nilpotent case will be used later.

A matrix $X\in\mathbb{H}^{n\times n}$ is said to be \textit{Hermitian} if $X^*=X$. Let $H\in\mathbb{H}^{n\times n}$ be an invertible Hermitian matrix. We consider the indefinite inner product $[\cdot,\cdot]$ generated by $H$:
\begin{equation*}
[x,y]=\langle Hx,y\rangle=y^*Hx,\quad x,y\in\mathbb{H}^n,
\end{equation*}
where $\langle\cdot,\cdot\rangle$ denotes the standard inner product. A matrix $A\in\mathbb{H}^{n\times n}$ is called \textit{$H$-selfadjoint} if $HA=A^*H$. Two pairs of matrices $(A_1,H_1)$ and $(A_2,H_2)$ are said to be \textit{unitarily similar} if there exists an invertible quaternion matrix $S$ such that $S^{-1}A_1S=A_2$ and $S^*H_1S=H_2$ hold.

As in the complex case, there exists a canonical form for the pair $(A,H)$ where $A\in\mathbb{H}^{n\times n}$ is an $H$-selfadjoint matrix. This is given in, for example \cite[Theorem~4.1]{Alpay},  \cite{Karow} and \cite[Theorem~10.1.1]{Rodman}.
\begin{theorem}\label{ThmcanonformH}
Let $H\in\mathbb{H}^{n\times n}$ be an invertible Hermitian matrix and $A\in\mathbb{H}^{n\times n}$ an $H$-selfadjoint matrix. Then there exists an invertible matrix $S\in\mathbb{H}^{n\times n}$ such that
\begin{eqnarray}\label{eqcanonformH1}
 S^{-1}AS&=&J_{k_1}(\lambda_1)\oplus\cdots\oplus J_{k_\alpha}(\lambda_\alpha)\nonumber\\
 &\oplus & \begin{bmatrix}
J_{k_{\alpha+1}}(\lambda_{\alpha+1}) & 0 \\ 0 & J_{k_{\alpha+1}}(\overline{\lambda}_{\alpha+1}) 
 \end{bmatrix}\oplus\cdots\oplus
\begin{bmatrix}
J_{k_{\beta}}(\lambda_{\beta}) & 0 \\ 0 & J_{k_{\beta}}(\overline{\lambda}_{\beta}) 
 \end{bmatrix},
\end{eqnarray}
where $\lambda_i\in\sigma(A)\cap\mathbb{R}$ for all $i=1,\ldots,\alpha$, $\lambda_i\in\sigma(A)\cap\mathbb{C}_+$ for all $i=\alpha+1,\ldots,\beta$, and
\begin{equation}\label{eqcanonformH2}
 S^*HS=\eta_1Q_{k_1}\oplus\cdots\oplus \eta_\alpha Q_{k_\alpha}\oplus Q_{2k_{\alpha+1}}\oplus\cdots\oplus Q_{2k_\beta} ,
\end{equation}
where $\eta_i=\pm 1$. The form $(S^{-1}AS,\,S^*HS)$ in \eqref{eqcanonformH1} and \eqref{eqcanonformH2} is uniquely determined by the pair $(A,H)$, up to a permutation of diagonal blocks.
\end{theorem}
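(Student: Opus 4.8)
The plan is to follow the classical Gohberg--Lancaster--Rodman construction for complex $H$-selfadjoint matrices and adapt each step to $\mathbb{H}$. \emph{Existence; reduction to one eigenvalue.} Taking the eigenvalues of $A$ to be their representatives in $\mathbb{R}\cup\mathbb{C}_+$, split $\mathbb{H}^n$ into the root subspaces $\mathcal{R}_{\lambda_1},\dots,\mathcal{R}_{\lambda_\beta}$ of $A$; these are $A$-invariant, and from $HA=A^*H$ together with $H^*=H$ one checks that $\mathcal{R}_{\lambda_i}$ and $\mathcal{R}_{\lambda_j}$ are $H$-orthogonal whenever $\lambda_i\neq\lambda_j$. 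Hence every $\mathcal{R}_{\lambda_i}$ is $H$-nondegenerate and the restricted pair $\bigl(A|_{\mathcal{R}_{\lambda_i}},H|_{\mathcal{R}_{\lambda_i}}\bigr)$ is again $H$-selfadjoint, so it suffices to build the canonical form one eigenvalue at a time and then take the direct sum.

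\emph{Real eigenvalue.} If $\lambda_i\in\mathbb{R}$, then $A-\lambda_i I$ is still $H$-selfadjoint, so we may assume $A=N$ is nilpotent, of index $p$ say. The crucial point is that the Hermitian form $(x,y)\mapsto[N^{p-1}x,y]$ cannot vanish identically on the diagonal: if $[N^{p-1}x,x]=0$ for all $x$, then expanding $[N^{p-1}(x+y),x+y]$ and $[N^{p-1}(x+y\alpha),x+y\alpha]$ for $\alpha\in\{\I,\J,\K\}$, and using that $0$ is the only quaternion anticommuting with each of $\I,\J,\K$, forces $[N^{p-1}x,y]=0$ for all $x,y$; since $H$ is invertible this gives $N^{p-1}=0$, a contradiction. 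Pick $x$ with $[N^{p-1}x,x]\neq0$; Hermiticity makes this value real, so after scaling $x$ by a real number it equals $\eta_i\in\{1,-1\}$. Then $\mathcal{M}=\operatorname{span}\{N^{p-1}x,\dots,Nx,x\}$ is $N$-invariant and $p$-dimensional, $N|_{\mathcal{M}}$ is a single Jordan block, and the Gram matrix of $H|_{\mathcal{M}}$ in this basis is anti-triangular with $\eta_i$ along the anti-diagonal, hence invertible; a standard triangular change of basis that keeps $N|_{\mathcal{M}}$ a Jordan block turns it into $\eta_iQ_p$. As $\mathcal{M}$ is $H$-nondegenerate, $\mathbb{H}^n$ splits as the $H$-orthogonal direct sum of $\mathcal{M}$ and its $H$-orthogonal complement, both $N$-invariant, and we induct on the dimension; reinstating the shift produces the blocks $J_{k_i}(\lambda_i)$ together with the signs $\eta_i$ of \eqref{eqcanonformH1}--\eqref{eqcanonformH2}.

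\emph{Non-real eigenvalue.} If the single eigenvalue $\lambda_i$ lies in $\mathbb{C}_+$, shifting no longer preserves $H$-selfadjointness, so here I would pass through the complex representation: $\omega_n(A)$ is $\omega_n(H)$-selfadjoint over $\mathbb{C}$ with $\sigma(\omega_n(A))=\{\lambda_i,\bar\lambda_i\}$, and since $\omega_n(J_k(\mu))=J_k(\mu)\oplus J_k(\bar\mu)$ the Jordan structures at $\lambda_i$ and $\bar\lambda_i$ mirror one another. The complex canonical form pairs each block at $\lambda_i$ with one at $\bar\lambda_i$ and attaches \emph{no} sign: each pair contributes $J_k(\lambda_i)\oplus J_k(\bar\lambda_i)$ with $H$-part $\begin{bmatrix}0&Q_k\\Q_k&0\end{bmatrix}$, which equals $Q_{2k}$. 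It remains to carry this reduction back inside $\Omega_{2n}$, that is, to realise the conjugating matrix as an $\omega_n$-image (equivalently, as a quaternion matrix); then $\omega_n^{-1}$ returns the $2\times2$ blocks $\begin{bmatrix}J_{k_i}(\lambda_i)&0\\0&J_{k_i}(\bar\lambda_i)\end{bmatrix}$ with $H$-part $Q_{2k_i}$. This last descent --- together with the fact that non-real eigenvalues carry no sign characteristic --- is the delicate point of the whole argument.

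\emph{Uniqueness.} The sizes $k_i$ and the eigenvalues $\lambda_i$ are the Jordan invariants of $A$ and so are determined by the pair. For a real $\lambda_i$, the multiset of signs attached to blocks of each fixed size $\ell$ is recovered, by Sylvester's law of inertia for Hermitian forms over $\mathbb{H}$, as the signature of the Hermitian form that $H$ induces on $\ker(A-\lambda_i I)^{\ell}$ modulo $\ker(A-\lambda_i I)^{\ell-1}+(A-\lambda_i I)\ker(A-\lambda_i I)^{\ell+1}$; this determines the pair in \eqref{eqcanonformH1}--\eqref{eqcanonformH2} up to a permutation of blocks. The triangular correction in the nilpotent step and these signature computations are routine but technical.
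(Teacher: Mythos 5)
Your real-eigenvalue argument and your uniqueness sketch are essentially sound: the quaternionic polarization fact you invoke is correct (a Hermitian form over $\mathbb{H}$ vanishing on the diagonal vanishes identically, since a purely imaginary quaternion anticommuting with each of $\I,\J,\K$ is zero), the numbers $[N^jx,x]$ are real because $HN^j$ is Hermitian, so the usual correction of $x$ by a real polynomial in $N$ goes through and produces $\eta_i Q_p$, and Sylvester's law of inertia holds for quaternionic Hermitian forms, so the signature argument recovers the signs. One caveat in your first reduction: for a nonreal class $\lambda$ the operator $A-\lambda I$ does not interact well with right eigenvalues ($Ax=x\lambda$ is not $Ax=\lambda x$), so the root subspaces must be defined via $\ker\bigl((A^2-2(\mathrm{Re}\,\lambda)A+|\lambda|^2I)^n\bigr)$ or through $\omega_n$; with that definition the $H$-orthogonality and nondegeneracy claims are correct and the reduction is fixable. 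Note also that the paper itself does not prove this theorem at all: it quotes it from \cite{Alpay}, \cite{Karow} and \cite[Theorem~10.1.1]{Rodman}, so the relevant comparison is with the proofs in those sources, which work directly over $\mathbb{H}$.

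The genuine gap is precisely the step you flag but do not close: in the nonreal case you apply the complex canonical form to $(\omega_n(A),\omega_n(H))$ and then need the conjugating matrix to lie in $\Omega_{2n}$. This descent is not a routine afterthought; it is essentially the nonreal half of the theorem. A complex invertible $T$ with $T^{-1}\omega_n(A)T=J$ and $T^*\omega_n(H)T=Q$ need not satisfy the symmetry $T=-J_0\bar TJ_0$, $J_0=\left[\begin{smallmatrix}0&I_n\\-I_n&0\end{smallmatrix}\right]$, that characterizes $\Omega_{2n}$, and naive repairs (replacing $T$ by a combination of $T$ and $-J_0\bar TJ_0$) need not stay invertible nor preserve the congruence relation $T^*\omega_n(H)T=Q$; one would need a fixed-point or averaging argument for this antiholomorphic involution, the analogue for simultaneous similarity-plus-congruence of the nontrivial fact that complex-similar real matrices are real-similar. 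The cited sources avoid the issue by arguing directly over $\mathbb{H}$: for Jordan chains at a nonreal class $\lambda\in\mathbb{C}_+$, $H$-selfadjointness forces $[x,y]\lambda=\bar\lambda\,[x,y]$ for the leading vectors, so the relevant pairing takes values in $\J\mathbb{C}$ and can be normalized with no sign attached, which produces the blocks $\bigl(\mathrm{diag}(J_k(\lambda),J_k(\bar\lambda)),\,Q_{2k}\bigr)$ and explains the absence of a sign characteristic. To complete your proof you must either supply that direct quaternionic argument for the nonreal classes or actually prove the descent claim; as written, the nonreal case (and with it the theorem) is not established.
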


We refer to the pair $(S^{-1}AS,S^*HS)$ in \eqref{eqcanonformH1} and \eqref{eqcanonformH2} as the \textit{canonical form} of the pair $(A,H)$ of quaternion matrices.

The following result from \cite{mthroots}  holds for quaternion matrices as well,  but by applying $\omega_n$ it also holds for matrices in the subalgebra $\Omega_{2n}$. We use it in the proofs throughout this paper.
\begin{lemma}\label{Lem2.1recall}
Let $X$ and $Y$ be $n\times n$ quaternion matrices such that the pair $(X,H_X)$ is unitarily similar to the pair $(Y,H_Y)$ where $H_X\in\mathbb{H}^{n\times n}$ and $H_Y\in\mathbb{H}^{n\times n}$ are invertible Hermitian matrices, i.e.\ there exists an invertible matrix $P\in\mathbb{H}^{n\times n}$ such that
\begin{equation*}
P^{-1}XP=Y\quad\textit{and}\quad P^*H_XP=H_Y.
\end{equation*}
Let the matrix $J\in\mathbb{H}^{n\times n}$ be an $H_X$-selfadjoint $m$th root of $X$, i.e.\ $J^m=X$. Then the matrix $A:=P^{-1}JP$ is an $H_Y$-selfadjoint $m$th root of $Y$.
\end{lemma}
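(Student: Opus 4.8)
The statement to prove is Lemma~\ref{Lem2.1recall}, which is a routine "transport of structure" result: given that $(X,H_X)$ is unitarily similar to $(Y,H_Y)$ via $P$, and $J$ is an $H_X$-selfadjoint $m$th root of $X$, the conjugate $A := P^{-1}JP$ is an $H_Y$-selfadjoint $m$th root of $Y$. The plan is to verify the two required properties directly: (a) $A^m = Y$, and (b) $A$ is $H_Y$-selfadjoint, i.e.\ $H_Y A = A^* H_Y$.

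For (a), the computation is a telescoping conjugation argument: $A^m = (P^{-1}JP)^m = P^{-1}J^m P = P^{-1}XP = Y$, where the middle equality uses that all the $P$ and $P^{-1}$ factors in between cancel, and the last equality is the hypothesis $P^{-1}XP = Y$. One only needs to be a little careful that matrix multiplication over $\mathbb{H}$ is associative (it is, even though $\mathbb{H}$ is noncommutative), so the rearrangement $(P^{-1}JP)(P^{-1}JP)\cdots(P^{-1}JP) = P^{-1}J^m P$ is legitimate.

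For (b), I would start from $A^* = (P^{-1}JP)^* = P^* J^* (P^{-1})^* = P^* J^* (P^*)^{-1}$, using the standard facts $(UV)^* = V^*U^*$ and $(P^{-1})^* = (P^*)^{-1}$ for quaternion matrices. Then $A^* H_Y = P^* J^* (P^*)^{-1} H_Y$. Now substitute $H_Y = P^* H_X P$ to get $A^* H_Y = P^* J^* (P^*)^{-1} P^* H_X P = P^* J^* H_X P = P^* H_X J P$, where the last step uses that $J$ is $H_X$-selfadjoint, i.e.\ $J^* H_X = H_X J$. Finally, $P^* H_X J P = P^* H_X P P^{-1} J P = H_Y A$, again using $H_Y = P^* H_X P$ and $A = P^{-1}JP$. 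This chain of equalities gives $A^* H_Y = H_Y A$, which is exactly the $H_Y$-selfadjointness of $A$.

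There is no real obstacle here; the only points requiring the slightest attention are the noncommutativity of $\mathbb{H}$ (which never actually causes trouble because every manipulation is an associativity rearrangement or an application of a stated identity, not a commutation of two factors) and the consistent use of the conjugate-transpose identities $(UV)^* = V^*U^*$ and $(P^{-1})^* = (P^*)^{-1}$, both valid for quaternion matrices. Since the isomorphism $\omega_n$ preserves products, sums, adjoints, inverses, and the identity (properties (i)--(v) in the Preliminaries), the identical argument proves the statement for matrices in the subalgebra $\Omega_{2n}$ as well, which is the form in which the lemma will be applied later in the paper.
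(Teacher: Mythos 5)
Your proof is correct: both the telescoping computation $A^m=P^{-1}J^mP=P^{-1}XP=Y$ and the chain $A^*H_Y=P^*J^*H_XP=P^*H_XJP=H_YA$ are exactly the verification required, and your use of $(UV)^*=V^*U^*$ and $(P^{-1})^*=(P^*)^{-1}$ is valid over $\mathbb{H}$. The paper itself gives no proof here---it recalls the lemma from \cite{mthroots} and notes it transfers to $\Omega_{2n}$ via $\omega_n$---so your direct computation, together with your closing remark about $\omega_n$, is precisely the intended argument.
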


From the properties of the map $\omega_n$ we see that a matrix $A$ is invertible if and only if $\omega_n(A)$ is invertible. Also, from Proposition~3.4.1 in \cite{Rodman}  $A$ is Hermitian if only if $\omega_n(A)$ is Hermitian.

To find a canonical form for a pair $(\tilde{A},\tilde{H})$ where $\tilde{A}\in\Omega_{2n}$ is $\tilde{H}$-selfadjoint, and $\tilde{H}\in\Omega_{2n}$ is invertible and Hermitian, we apply $\omega_n$ to the equations \eqref{eqcanonformH1} and \eqref{eqcanonformH2}. Denote the right-hand side of \eqref{eqcanonformH1} by $J$ and the right-hand side of \eqref{eqcanonformH2} by $Q$ and then we have
\begin{equation*}
(\omega_n(S))^{-1}\omega_n(A)\omega_n(S)=\omega_n(J),\quad \omega_n(S)^*\omega_n(H)\omega_n(S)=\omega_n(Q).
\end{equation*}
The uniqueness follows from the fact that the canonical form of $(A,H)$ is  unique (Theorem~\ref{ThmcanonformH}) and that $\omega_n$ is an isomorphism.
Therefore the canonical form of a pair of matrices in $\Omega_{2n}$ is as follows.

\begin{theorem}\label{ThmcanonformOmega}
Let $\tilde{H}\in\Omega_{2n}$ be an invertible Hermitian matrix and $\tilde{A}\in\Omega_{2n}$ an $\tilde{H}$-selfadjoint matrix. Then there exists an invertible matrix $S\in\Omega_{2n}$ such that
\begin{eqnarray}\label{eqcanonformOmega1}
S^{-1}\tilde{A}S&=&J_{k_1}(\lambda_1)\oplus\cdots\oplus J_{k_\alpha}(\lambda_\alpha)\oplus \begin{bmatrix}
J_{k_{\alpha+i}}(\lambda_{\alpha+i}) &0 \\ 0& J_{k_{\alpha+i}}(\overline{\lambda}_{\alpha+i}) 
\end{bmatrix}\oplus\cdots\oplus\begin{bmatrix}
J_{k_\beta}(\lambda_\beta) &0 \\ 0& J_{k_\beta}(\overline{\lambda}_\beta) 
\end{bmatrix} \nonumber\\
&\oplus& J_{k_1}(\lambda_1)\oplus\cdots\oplus J_{k_\alpha}(\lambda_\alpha)\oplus\begin{bmatrix}
J_{k_{\alpha+i}}(\overline{\lambda}_{\alpha+i}) &0 \\ 0& J_{k_{\alpha+i}}(\lambda_{\alpha+i}) 
\end{bmatrix}\oplus\cdots\oplus\begin{bmatrix}
J_{k_\beta}(\overline{\lambda}_\beta) &0 \\ 0& J_{k_\beta}(\lambda_\beta)
\end{bmatrix},
\end{eqnarray}
where $\lambda_i\in\sigma(\tilde{A})\cap\mathbb{R}$ for all $i=1,\ldots,\alpha$, $\lambda_i\in\sigma(\tilde{A})\cap\mathbb{C}_+$ for all $i=\alpha+1,\ldots,\beta$; and
\begin{eqnarray}\label{eqcanonformOmega2}
S^*\tilde{H}S &=& \eta_1
Q_{k_1}\oplus\cdots\oplus\eta_\alpha Q_{k_\alpha}
\oplus
Q_{2k_{\alpha+1}} \oplus\cdots\oplus Q_{2k_\beta}\nonumber\\
&\oplus & \eta_1
Q_{k_1}\oplus\cdots\oplus\eta_\alpha Q_{k_\alpha}
\oplus
Q_{2k_{\alpha+1}} \oplus\cdots\oplus Q_{2k_\beta},
\end{eqnarray}
where $\eta_i=\pm 1$. The form $(S^{-1}\tilde{A}S,S^*\tilde{H}S)$ in \eqref{eqcanonformOmega1} and \eqref{eqcanonformOmega2} is uniquely determined by the pair $(\tilde{A},\tilde{H})$ up to a permutation of diagonal blocks.
\end{theorem}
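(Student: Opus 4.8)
The plan is to transport the quaternion canonical form of Theorem~\ref{ThmcanonformH} across the isomorphism $\omega_n$, exactly as sketched in the paragraph preceding the statement. First I would set $A:=\omega_n^{-1}(\tilde A)$ and $H:=\omega_n^{-1}(\tilde H)$; these are well defined because $\omega_n$ maps $\mathbb H^{n\times n}$ isomorphically onto $\Omega_{2n}$ and $\tilde A,\tilde H\in\Omega_{2n}$. Applying $\omega_n^{-1}$ to $\tilde H\tilde A=\tilde A^*\tilde H$ and using properties (ii) and (iv) of $\omega_n$ gives $HA=A^*H$, so $A$ is $H$-selfadjoint; property (v) together with the fact that $X$ is invertible iff $\omega_n(X)$ is invertible shows $H$ is invertible; and Proposition~3.4.1 in \cite{Rodman} (equivalently property (iv)) shows $H$ is Hermitian. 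Thus Theorem~\ref{ThmcanonformH} applies to $(A,H)$.

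Next, let $S_0\in\mathbb H^{n\times n}$ be the invertible matrix supplied by Theorem~\ref{ThmcanonformH}, and write $J$ and $Q$ for the right-hand sides of \eqref{eqcanonformH1} and \eqref{eqcanonformH2}, so that $S_0^{-1}AS_0=J$ and $S_0^*HS_0=Q$. Put $S:=\omega_n(S_0)\in\Omega_{2n}$, which is invertible by property (v). Applying $\omega_n$ to the two identities and using properties (i)--(v) yields
\begin{equation*}
S^{-1}\tilde A S=\omega_n(J),\qquad S^*\tilde H S=\omega_n(Q).
\end{equation*}
It then remains to identify $\omega_n(J)$ and $\omega_n(Q)$. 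Here the key observation --- and the one point where care is needed --- is that $\omega_n$ does not commute with direct sums on the nose, only up to a permutation similarity; so rather than applying $\omega_n$ summand by summand I would use that every entry of $J$ lies in $\mathbb C$ and every entry of $Q$ lies in $\mathbb R$ (the blocks $J_{k_i}(\lambda_i)$ with $i\le\alpha$, the $\eta_iQ_{k_i}$ and the $Q_{2k_i}$ are real, and the remaining Jordan-type blocks are complex). Writing such a matrix $X$ as $X=X+\J\cdot 0$, the definition of $\omega_n$ gives directly $\omega_n(X)=X\oplus\bar X$. Conjugating entrywise leaves $Q$ unchanged and replaces each $J_{k_i}(\lambda_i)$ by $J_{k_i}(\overline{\lambda}_i)$, which for $i\le\alpha$ changes nothing and for $i>\alpha$ interchanges the two diagonal blocks of the corresponding $2k_i\times 2k_i$ summand; hence $\omega_n(J)$ is precisely the matrix in \eqref{eqcanonformOmega1} and $\omega_n(Q)=Q\oplus Q$ is precisely the matrix in \eqref{eqcanonformOmega2}.

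For the uniqueness clause, note that the computation just performed shows that each pair of the form \eqref{eqcanonformOmega1}--\eqref{eqcanonformOmega2} with given data $(k_i,\lambda_i,\eta_i)$ is $\omega_n$ applied to the quaternion canonical pair \eqref{eqcanonformH1}--\eqref{eqcanonformH2} with the same data. So if $(\tilde A,\tilde H)$ admitted two such representations, applying $\omega_n^{-1}$ would exhibit $(A,H)$ as unitarily similar (via $\omega_n^{-1}$ of the respective transforming matrices) to two canonical pairs in the sense of Theorem~\ref{ThmcanonformH}; by the uniqueness statement there the two data sets coincide up to a permutation of diagonal blocks, and transporting this back through $\omega_n$ gives the claim for the $\Omega_{2n}$-forms. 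I do not expect a genuine obstacle; the only things to watch are the bookkeeping of conjugates in the second half of \eqref{eqcanonformOmega1} and the point noted above that $\omega_n$ should be applied to $J$ and $Q$ as honest complex (respectively real) matrices rather than blockwise.
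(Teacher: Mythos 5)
Your proposal is correct and follows essentially the same route as the paper: pull $(\tilde A,\tilde H)$ back through $\omega_n^{-1}$, apply the quaternion canonical form of Theorem~\ref{ThmcanonformH}, and push the transforming matrix and the pair $(J,Q)$ forward via $\omega_n$, identifying $\omega_n(J)=J\oplus\bar J$ and $\omega_n(Q)=Q\oplus Q$ and transporting uniqueness through the isomorphism. Your extra care about applying $\omega_n$ to $J$ and $Q$ as whole complex (resp.\ real) matrices rather than summand by summand is exactly the bookkeeping implicit in the paper's argument.
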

We note at this stage that the canonical form of the nonreal part of an $H$-selfadjoint matrix $A$ must be of dimensions a multiple of 4 due to the direct sums of $J_{k_i}(\lambda_{i})$ and $J_{k_i}(\overline{\lambda}_{i})$ occurring twice. 
It is crucial to ensure that all matrices throughout the proofs are in $\Omega_{2n}$ and for this reason we give the following result to explain why we can study different Jordan blocks separately.
\begin{lemma}\label{LemBlockPerBlock}
Let $H_1=Q_1\oplus \bar{Q}_1,B_1=J_1\oplus \bar{J}_1\in\Omega_{2n}$ and $H_2=Q_2\oplus \bar{Q}_2,B_2=J_2\oplus \bar{J}_2\in\Omega_{2p}$ where $B_1$ is $H_1$-selfadjoint and $B_2$ is $H_2$-selfadjoint. Let $A_1\in\Omega_{2n}$ be an $H_1$-selfadjoint $m$th root of $B_1$ and $A_2\in\Omega_{2p}$ an $H_2$-selfadjoint $m$th root of $B_2$, and let their entries be as follows
\begin{equation*}
A_1=\begin{bmatrix}
A_{11}^{(1)} & \bar{A}_{12}^{(1)}\\
-A_{12}^{(1)} & \bar{A}_{11}^{(1)}
\end{bmatrix}\quad \textit{and}\quad A_2=\begin{bmatrix}
A_{11}^{(2)} & \bar{A}_{12}^{(2)}\\
-A_{12}^{(2)} & \bar{A}_{11}^{(2)}
\end{bmatrix}.
\end{equation*}
Let $\hat{B}=J_1\oplus J_2\oplus \bar{J}_1\oplus \bar{J}_2\in\Omega_{2(n+p)}$ and $\hat{H}=Q_1\oplus Q_2\oplus \bar{Q}_1\oplus \bar{Q}_2\in\Omega_{2(n+p)}$. Then $\hat{B}$ is $\hat{H}$-selfadjoint and the matrix 
\begin{equation*}
\hat{A}=\begin{bmatrix}
A_{11}^{(1)} &0& \bar{A}_{12}^{(1)}&0\\
0& A_{11}^{(2)} &0& \bar{A}_{12}^{(2)}\\
-A_{12}^{(1)} &0& \bar{A}_{11}^{(1)}&0 \\
0& -A_{12}^{(2)} &0& \bar{A}_{11}^{(2)}
\end{bmatrix}\in\Omega_{2(n+p)}
\end{equation*}
is an $\hat{H}$-selfadjoint $m$th root of the matrix $\hat{B}$.
\end{lemma}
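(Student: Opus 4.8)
The plan is to observe that the ``obvious'' root, the direct sum $A_1\oplus A_2$, is an $(H_1\oplus H_2)$-selfadjoint $m$th root of $B_1\oplus B_2$, and that $\hat A$ is precisely this direct sum rewritten in a coordinate ordering which simultaneously turns $B_1\oplus B_2$ into $\hat B$ and $H_1\oplus H_2$ into $\hat H$. The point of the lemma is that $A_1\oplus A_2$ need not lie in $\Omega_{2(n+p)}$, whereas after this reordering it does; the conclusion then follows from Lemma~\ref{Lem2.1recall}.

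Concretely, I would write $\mathbb{C}^{2(n+p)}=\mathbb{C}^n\oplus\mathbb{C}^n\oplus\mathbb{C}^p\oplus\mathbb{C}^p$ and label the four summands $1a,1b,2a,2b$. In this ordering $B_1\oplus B_2=J_1\oplus\bar J_1\oplus J_2\oplus\bar J_2$, $H_1\oplus H_2=Q_1\oplus\bar Q_1\oplus Q_2\oplus\bar Q_2$, and $A_1\oplus A_2$ has the two $2\times2$ block patterns of $A_1$ and $A_2$ on the index sets $\{1a,1b\}$ and $\{2a,2b\}$ and zeros elsewhere. Let $P$ be the real permutation matrix reordering the coordinate blocks from $1a,1b,2a,2b$ to $1a,2a,1b,2b$; then $P^{-1}=P^{\top}=P^{*}$. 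Conjugation by $P$ permutes the block-diagonal summands and mixes the blocks of $A_1\oplus A_2$, giving exactly
\[
P^{-1}(B_1\oplus B_2)P=\hat B,\qquad P^{*}(H_1\oplus H_2)P=\hat H,\qquad P^{-1}(A_1\oplus A_2)P=\hat A;
\]
the last identity is a short block-by-block check against the displayed $\hat A$. Since $H_1\oplus H_2$ is invertible and Hermitian, so is $\hat H$, and since $B_1\oplus B_2$ is plainly $(H_1\oplus H_2)$-selfadjoint, transporting this by $P$ shows $\hat B$ is $\hat H$-selfadjoint; membership $\hat B,\hat H,\hat A\in\Omega_{2(n+p)}$ is evident from their forms.

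It remains to see $\hat A^m=\hat B$ and that $\hat A$ is $\hat H$-selfadjoint. I would first note $(A_1\oplus A_2)^m=A_1^m\oplus A_2^m=B_1\oplus B_2$ and $(H_1\oplus H_2)(A_1\oplus A_2)=H_1A_1\oplus H_2A_2=A_1^{*}H_1\oplus A_2^{*}H_2=(A_1\oplus A_2)^{*}(H_1\oplus H_2)$, so that $A_1\oplus A_2$ is an $(H_1\oplus H_2)$-selfadjoint $m$th root of $B_1\oplus B_2$. Applying Lemma~\ref{Lem2.1recall} with $X=B_1\oplus B_2$, $H_X=H_1\oplus H_2$, $Y=\hat B$, $H_Y=\hat H$, $J=A_1\oplus A_2$ and similarity matrix $P$ (its proof being purely algebraic and valid verbatim for these complex matrices) then gives that $P^{-1}(A_1\oplus A_2)P=\hat A$ is an $\hat H$-selfadjoint $m$th root of $\hat B$, as required. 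Alternatively one can simply compute $\hat A^m=P^{-1}(A_1\oplus A_2)^mP=P^{-1}(B_1\oplus B_2)P=\hat B$ and $\hat H\hat A=P^{-1}(H_1\oplus H_2)(A_1\oplus A_2)P=P^{-1}(A_1\oplus A_2)^{*}(H_1\oplus H_2)P=\hat A^{*}\hat H$, using $(P^{-1}MP)^{*}=P^{-1}M^{*}P$.

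I do not expect a genuine obstacle here: the only thing requiring care is the bookkeeping of which coordinate block is sent where by $P$ (and that $\bar Q_i$ causes no trouble, the sip matrices being real), together with the elementary remark that $P$, being a permutation matrix, is real and orthogonal, so $P^{*}=P^{-1}$ and the unitary-similarity hypothesis of Lemma~\ref{Lem2.1recall} really is met. The substance is conceptual rather than technical: $A_1\oplus A_2$ is the correct root but sits in the wrong subalgebra, and $P$ is exactly the change of coordinates that repairs this.
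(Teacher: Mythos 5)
Your proposal is correct and follows essentially the same route as the paper: both prove the lemma by conjugating the direct sum $A_1\oplus A_2$ (together with $B_1\oplus B_2$ and $H_1\oplus H_2$) by a block permutation matrix $P$ with $P^*=P^{-1}$, which lands everything in $\Omega_{2(n+p)}$ and yields $\hat A^m=\hat B$ and the $\hat H$-selfadjointness by direct computation. The only cosmetic differences are that the paper conjugates as $P(\cdot)P^{-1}$ rather than $P^{-1}(\cdot)P$ and argues directly instead of invoking Lemma~\ref{Lem2.1recall}.
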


\begin{proof}
Let $P$ be the following permutation matrix
\begin{equation*}
P=\begin{bmatrix}
I_n &0&0&0\\
0&0&I_p&0\\
0&I_n&0&0\\
0&0&0&I_p
\end{bmatrix}.
\end{equation*} 
This $P$ produces a map from $\Omega_{2n}\oplus\Omega_{2p}$ to $\Omega_{2(n+p)}$ which satisfies $P(A_1\oplus A_2)P^{-1}=\hat{A}$. We also then have $P(B_1\oplus B_2)P^{-1}=\hat{B}$ and $P(H_1\oplus H_2)P^{-1}=\hat{H}$. Therefore
\begin{equation*}
\hat{A}^m=\left(P(A_1\oplus A_2)P^{-1}\right)^m=P(A_1\oplus A_2)^mP^{-1}=P(B_1\oplus B_2)P^{-1}=\hat{B}.
\end{equation*}
Now, by noting that $P^*=P^{-1}$ and using the facts that $B_1$ and $A_1$ are $H_1$-selfadjoint and $B_2$ and $A_2$ are $H_2$-selfadjoint, it follows that $\hat{H}\hat{B}=\hat{B}^*\hat{H}$ and $\hat{H}\hat{A}=\hat{A}^*\hat{H}$.
Therefore $\hat{A}$ is an $\hat{H}$-selfadjoint $m$th root of the $\hat{H}$-selfadjoint matrix $\hat{B}$.
\end{proof}

The matrix $J_i$ in Lemma~\ref{LemBlockPerBlock} can be the Jordan normal form $J_{k}(\lambda)$ in the case where $\lambda$ is real or the Jordan normal form $J_k(\lambda)\oplus J_k(\bar{\lambda})$ in the case where $\lambda$ is nonreal.

In general, let the permutation matrix $P$ be a $2t\times 2t$ block matrix where the block in the $i$th row and $j$th column is defined by
\begin{equation}\label{eqPermutationMatrix}
P_{ij}=\begin{cases}
I_{2k_i} & \text{if } j=2i-1,\,i\leq t; \\
I_{2k_{i-t}} & \text{if }j=2(i-t),\, i>t;\\
0 & \text{otherwise} . 
\end{cases}
\end{equation}
Then we have, for example, that
\begin{equation*}
P\bigoplus_{j=1}^t\left(Q_{k_j}\oplus -Q_{k_j}\oplus Q_{k_j}\oplus -Q_{k_j}\right)P^{-1}=\bigoplus_{j=1}^t\left(Q_{k_j}\oplus -Q_{k_j}\right)\oplus \bigoplus_{j=1}^t\left(Q_{k_j}\oplus -Q_{k_j}\right).
\end{equation*}

\section{Existence of $m$th roots}

We first present a very handy tool for working with quaternion matrices.
\begin{lemma}
Let $H$ be an $n\times n$ quaternion matrix which is invertible and Hermitian,  and let $B$ be an $n\times n$ $H$-selfadjoint quaternion matrix. There exists an $H$-selfadjoint quaternion matrix $A$ such that $A^m=B$ if and only if there exists an $\tilde{H}=\omega_n(H)$-selfadjoint matrix $\tilde{A}=\omega_n(A)$ such that $\tilde{A}^m=\tilde{B}$, where $\tilde{B}=\omega_n(B)$ is an $\tilde{H}$-selfadjoint matrix in the subalgebra $\Omega_{2n}$.
\end{lemma}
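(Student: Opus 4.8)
The plan is to simply transport the two defining conditions of an "$H$-selfadjoint $m$th root" across the isomorphism $\omega_n$, using the algebraic properties (i)--(v) recorded above together with the facts, already noted in the Preliminaries, that $\omega_n$ is a bijection of $\mathbb{H}^{n\times n}$ onto $\Omega_{2n}$, that $X$ is invertible iff $\omega_n(X)$ is invertible, and (Proposition~3.4.1 in \cite{Rodman}) that $X$ is Hermitian iff $\omega_n(X)$ is Hermitian. Thus from the outset $\tilde H=\omega_n(H)$ is invertible and Hermitian and $\tilde B=\omega_n(B)$ lies in $\Omega_{2n}$, so the hypotheses on the right-hand side match those on the left.

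First I would handle the forward implication. Assume $A\in\mathbb{H}^{n\times n}$ is $H$-selfadjoint with $A^m=B$. Applying $\omega_n$ to $A^m=B$ and using property (ii) repeatedly gives $\big(\omega_n(A)\big)^m=\omega_n(A^m)=\omega_n(B)=\tilde B$, i.e.\ $\tilde A^m=\tilde B$ with $\tilde A=\omega_n(A)$. Applying $\omega_n$ to $HA=A^*H$ and using (ii) and (iv) gives $\omega_n(H)\omega_n(A)=\omega_n(A)^*\omega_n(H)$, so $\tilde A$ is $\tilde H$-selfadjoint; the same argument applied to $HB=B^*H$ shows $\tilde B$ is $\tilde H$-selfadjoint.

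Conversely, suppose there is an $\tilde H$-selfadjoint $\tilde A\in\Omega_{2n}$ with $\tilde A^m=\tilde B$. Since $\omega_n$ maps $\mathbb{H}^{n\times n}$ onto $\Omega_{2n}$, there is a unique $A\in\mathbb{H}^{n\times n}$ with $\omega_n(A)=\tilde A$. Then $\omega_n(A^m)=\omega_n(A)^m=\tilde A^m=\tilde B=\omega_n(B)$, and injectivity of $\omega_n$ forces $A^m=B$; likewise $\omega_n(HA)=\omega_n(H)\omega_n(A)=\tilde A^*\tilde H=\omega_n(A^*H)$ yields $HA=A^*H$, so $A$ is $H$-selfadjoint. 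This gives the equivalence.

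There is essentially no obstacle: the argument is a routine unwinding of the homomorphism properties. The only point that genuinely needs to be stated is the surjectivity of $\omega_n$ onto $\Omega_{2n}$, which is what guarantees that an $m$th root assumed a priori only to live in the subalgebra $\Omega_{2n}$ is automatically of the form $\omega_n(A)$ for some quaternion matrix $A$, so that the "vice versa" direction produces a genuine quaternion $m$th root.
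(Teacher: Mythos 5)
Your proposal is correct and follows the same route as the paper: the paper's proof is a one-line appeal to the isomorphism and multiplicativity/adjoint-compatibility properties of $\omega_n$, which you have simply spelled out in full, including the surjectivity onto $\Omega_{2n}$ needed for the converse direction. No gaps; this matches the intended argument.
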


\begin{proof}
From the properties of the map $\omega_n$ and the fact that it is an isomorphism from $\mathbb{H}^{n\times n}$ to $\Omega_{2n}$ we see that $A^m=B$ if and only if $(\omega_n(A))^m=\omega_n(B)$, and $A$ is $H$-selfadjoint if and only if $\omega_n(A)$ is $\omega_n(H)$-selfadjoint. 
\end{proof}

Because of this lemma, necessary and sufficient conditions for the existence of an $H$-selfadjoint $m$th root of an $H$-selfadjoint matrix $B$ are the same as the necessary and sufficient conditions for the existence of an $\tilde{H}$-selfadjoint $m$th root of an $\tilde{H}$-selfadjoint matrix $\tilde{B}$ where $\tilde{B},\tilde{H}\in\Omega_{2n}$. If we could work in $\mathbb{C}^{2n\times 2n}$ we would now be done by simply referring to \cite{mthroots}. However, since $\omega_n$ is an isomorphism between $\mathbb{H}^{n\times n}$ and the subalgebra $\Omega_{2n}$ of $\mathbb{C}^{2n\times 2n}$, we have to be more careful.

We now first present in Theorem~\ref{ThmN&SconditionsOmegaB} our main theorem for the existence of $H$-selfadjoint $m$th roots of $H$-selfadjoint matrices in $\Omega_{2n}$. The proof of this theorem may be split into the following separate parts due to Lemma~\ref{LemBlockPerBlock}, viz.\ the case where $\tilde{B}$ has only positive eigenvalues, the case where $\tilde{B}$ has only nonreal eigenvalues, the case where $\tilde{B}$ has only negative eigenvalues (separated into two cases for $m$ even and for $m$ odd), and lastly, the case where $\tilde{B}$ has only zero as an eigenvalue. These we state and prove in Theorems~\ref{ThmExistencePosEig}, \ref{ThmExistenceNonreal}, \ref{ThmExistenceNegEven}, \ref{ThmExistenceNegOdd} and \ref{ThmExistenceZeroEig}.

\begin{theorem}\label{ThmN&SconditionsOmegaB}
Let $\tilde{B},\tilde{H}$ be matrices in the subalgebra $\Omega_{2n}$ such that $\tilde{H}$ is invertible and Hermitian, and $\tilde{B}$ is $\tilde{H}$-selfadjoint. Then there exists an $\tilde{H}$-selfadjoint matrix in $\Omega_{2n}$, say $\tilde{A}$,  such that $\tilde{A}^m=\tilde{B}$ if and only if the canonical form of $(\tilde{B},\tilde{H})$ has the following properties:
\begin{enumerate}
\item[\rm 1.] The part of the canonical form corresponding to negative eigenvalues when $m$ is even, say $(\tilde{B}_-,\tilde{H}_-)$, is given by
\begin{equation*}
\tilde{B}_-=\bigoplus_{j=1}^t\left(J_{k_j}(\lambda_j)\oplus J_{k_j}(\lambda_j)\right)\oplus \bigoplus_{j=1}^t\left(J_{k_j}(\lambda_j)\oplus J_{k_j}(\lambda_j)\right),\end{equation*}
\begin{equation*}
\tilde{H}_-=\bigoplus_{j=1}^t\left(Q_{k_j}\oplus -Q_{k_j}\right)\oplus \bigoplus_{j=1}^t\left(Q_{k_j}\oplus -Q_{k_j}\right),
\end{equation*}
where $\lambda_j<0$.
\item[\rm 2.] The part of the canonical form corresponding to zero eigenvalues, say $(\tilde{B}_0,\tilde{H}_0)$, is given by 
\begin{equation*} 
\tilde{B}_0=\bigoplus_{j=1}^t\left(\bigoplus_{i=1}^{r_j}J_{a_{j}+1}(0)\oplus\bigoplus_{i=r_j+1}^{m}J_{a_j}(0)\right)\oplus\bigoplus_{j=1}^t\left(\bigoplus_{i=1}^{r_j}J_{a_{j}+1}(0)\oplus\bigoplus_{i=r_j+1}^{m}J_{a_j}(0)\right)
\end{equation*}
and 
\begin{equation*}
\tilde{H}_0=\bigoplus_{j=1}^t\left(\bigoplus_{i=1}^{r_j}\varepsilon_i^{(j)}Q_{a_{j}+1}\oplus\bigoplus_{i=r_j+1}^{m}\varepsilon_{i}^{(j)}Q_{a_j}\right)\oplus\bigoplus_{j=1}^t\left(\bigoplus_{i=1}^{r_j}\varepsilon_i^{(j)}Q_{a_{j}+1}\oplus\bigoplus_{i=r_j+1}^{m}\varepsilon_{i}^{(j)}Q_{a_j}\right),
\end{equation*}
for some $a_j,r_j\in\mathbb{Z}$ with $0<r_j\leq m$. The signs are as follows, given in terms of $\eta_j$, where $\eta_j$ could be either $1$ or $-1$: If $r_j$ (respectively $m-r_j$) is even, half of $\varepsilon_i^{(j)}$ for $i=1,\ldots,r_j$ (respectively for $i=r_j+1,\ldots,m$) is equal to $\eta_j$ and the other half is equal to $-\eta_j$. If $r_j$ (respectively $m-r_j$) is odd, there is one more of $\varepsilon_i^{(j)}$ for $i=1,\ldots,r_j$ (respectively for $i=r_j+1,\ldots,m$) equal to $\eta_j$ than those equal to $-\eta_j$.
\end{enumerate}

\end{theorem}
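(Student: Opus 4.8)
The plan is to reduce the statement to the five spectral sub-cases announced just before it and then reassemble. By Theorem~\ref{ThmcanonformOmega} together with Lemma~\ref{Lem2.1recall}, the existence of an $\tilde{H}$-selfadjoint $m$th root of $\tilde{B}$ depends only on the canonical form of $(\tilde{B},\tilde{H})$, so we may assume $(\tilde{B},\tilde{H})$ is already in canonical form. Any $m$th root $\tilde{A}$ commutes with $\tilde{B}$ and hence preserves each generalized eigenspace of $\tilde{B}$; grouped by eigenvalue these are non-degenerate with respect to $[\cdot,\cdot]$ for real eigenvalues and combine into non-degenerate subspaces for conjugate pairs of nonreal eigenvalues, and they are exactly the summands of the canonical form. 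Using a permutation similarity of the type \eqref{eqPermutationMatrix} to pass between the canonical ordering and a genuinely block-diagonal arrangement, and then Lemma~\ref{LemBlockPerBlock} to glue partial roots, we reduce to the case where $\sigma(\tilde{B})$ consists of a single eigenvalue $\beta$, necessarily of one of four types: $\beta>0$, $\beta$ nonreal, $\beta<0$, or $\beta=0$. The claim then becomes: a root always exists when $\beta>0$ (Theorem~\ref{ThmExistencePosEig}) or $\beta$ is nonreal (Theorem~\ref{ThmExistenceNonreal}); when $\beta<0$ a root exists iff $m$ is odd (Theorem~\ref{ThmExistenceNegOdd}) or $m$ is even and the block has the shape of condition~1 (Theorem~\ref{ThmExistenceNegEven}); and when $\beta=0$ a root exists iff the block has the shape of condition~2 (Theorem~\ref{ThmExistenceZeroEig}).

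For necessity, the ingredient beyond the complex-case conditions of \cite{mthroots} is that $\tilde{A}$ itself lies in $\Omega_{2n}$. By Corollary~\ref{CorDoubleNrinSegre} and the remark following it, every Jordan block of $\tilde{A}$, hence of $\tilde{B}=\tilde{A}^m$, attached to a real eigenvalue occurs with even multiplicity and with its sign in $\tilde{H}$ repeated on the duplicate block; this forces the outer doubling $\bigoplus_{j=1}^{t}(\cdots)\oplus\bigoplus_{j=1}^{t}(\cdots)$ present in both conditions. Overlaying the complex necessary conditions — for $\beta<0$ with $m$ even, equal-size blocks of $\tilde{B}$ must occur in pairs carrying opposite signs in $\tilde{H}$, arising from a conjugate pair $J_{k}(\mu)\oplus J_{k}(\bar{\mu})$ with $\mu^m=\beta$; and for $\beta=0$, the Segre characteristic must come from splitting a partition of integers as evenly as possible into $m$ parts, with the accompanying signs obeying the rule stated for the $\varepsilon_i^{(j)}$ — yields exactly the shapes recorded in conditions~1 and~2.

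Sufficiency is established constructively in the five sub-theorems. For each block group of the canonical form one exhibits an explicit $\tilde{H}$-selfadjoint $m$th root. The crucial observation is that each Jordan block of the $\Omega_{2n}$ canonical form appears paired with a conjugate copy — literally the same block when $\beta$ is real — so an explicit complex block root taken from \cite{mthroots} can be assembled, via the representation $J\mapsto\begin{bmatrix}A_{11} & \bar{A}_{12}\\ -A_{12} & \bar{A}_{11}\end{bmatrix}$, into a matrix that genuinely lies in $\Omega_{2n}$ (one may take $A_{12}=0$ when $\beta$ is real). The permutation matrix \eqref{eqPermutationMatrix} then converts the canonical ordering into the block-diagonal form on which Lemma~\ref{LemBlockPerBlock} operates, and that lemma glues the per-block roots into a global $\tilde{H}$-selfadjoint $m$th root of $\tilde{B}$.

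The main obstacle is the combined sign-characteristic and $\Omega_{2n}$-membership bookkeeping in the $\beta<0$, $m$ even case and the $\beta=0$ case: one must verify that passing to $m$th powers and back transforms the signs in $\tilde{H}$ exactly as recorded by the $\varepsilon_i^{(j)}$ — in particular, why an odd number of blocks forces one more $\eta_j$ than $-\eta_j$ while an even number forces an exact split — and, simultaneously, that the explicitly constructed block roots retain the off-diagonal conjugation symmetry defining $\Omega_{2n}$ rather than merely lying in $\mathbb{C}^{2n\times 2n}$. This quantitative core is the content of Theorems~\ref{ThmExistenceNegEven} and \ref{ThmExistenceZeroEig}.
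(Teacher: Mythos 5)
Your proposal is correct and follows essentially the same route as the paper: the paper's proof of this theorem is exactly the reduction, via Lemma~\ref{Lem2.1recall} (canonical form), the permutation \eqref{eqPermutationMatrix} and Lemma~\ref{LemBlockPerBlock}, to the spectral sub-cases settled in Theorems~\ref{ThmExistencePosEig}--\ref{ThmExistenceZeroEig}. Your additional remarks on the necessity direction (an $m$th root commutes with $\tilde{B}$, hence respects the $\tilde{H}$-nondegenerate spectral summands) only make explicit a step the paper leaves implicit, and do not change the argument.
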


Note that it follows from Lemma~\ref{Lem2.1recall} (which also holds for matrices in $\Omega_{2n}$) that it is sufficient to assume that the pair $(\tilde{B},\tilde{H})$ is in canonical form.

For the positive eigenvalue case, we now prove the following:
\begin{theorem}\label{ThmExistencePosEig}
Let $\tilde{B},\tilde{H}\in\Omega_{2n}$, where $\tilde{H}$ is invertible and Hermitian, and $\tilde{B}$ is $\tilde{H}$-selfadjoint with a spectrum consisting of only positive real numbers. Then there exists an $\tilde{H}$-selfadjoint matrix in $\Omega_{2n}$, say $\tilde{A}$, such that $\tilde{A}^m=\tilde{B}$.
\end{theorem}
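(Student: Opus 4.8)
The plan is to reduce to the scalar-block case using Lemma~\ref{LemBlockPerBlock} and then exhibit an explicit $m$th root on each block that stays inside $\Omega_{2n}$. By Lemma~\ref{Lem2.1recall} we may assume $(\tilde B,\tilde H)$ is already in the canonical form of Theorem~\ref{ThmcanonformOmega}. Since $\sigma(\tilde B)\subseteq(0,\infty)$, every eigenvalue is real, so the canonical form is a direct sum of blocks $J_{k}(\lambda)\oplus J_{k}(\lambda)$ (two copies, as forced by Corollary~\ref{CorDoubleNrinSegre}/the structure of $\Omega_{2n}$) paired with $\eta Q_{k}\oplus\eta Q_{k}$, $\lambda>0$, $\eta=\pm1$. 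By Lemma~\ref{LemBlockPerBlock} and the permutation \eqref{eqPermutationMatrix}, it suffices to build an $H_0$-selfadjoint $m$th root inside $\Omega_{2k}$ for a single pair $(J_k(\lambda)\oplus J_k(\lambda),\ \eta Q_k\oplus\eta Q_k)$; taking the direct sum of these roots and conjugating back by the permutation matrix gives the result.

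For a single block, first I would recall the classical complex fact: since $\lambda>0$, the principal branch of $t\mapsto t^{1/m}$ is analytic in a neighbourhood of $\lambda$, so $f(J_k(\lambda))$ (the primary matrix function) is an upper-triangular Toeplitz matrix $R$ with $R^m=J_k(\lambda)$; moreover $R$ is a real polynomial $p$ in $J_k(\lambda)$ with real coefficients (the Taylor coefficients of $f$ at $\lambda$ are real). The key algebraic point, used in \cite{mthroots} for the complex case, is that such a primary root $R=p(J_k(\lambda))$ is automatically $\eta Q_k$-selfadjoint: indeed $Q_k J_k(\lambda)=J_k(\lambda)^* Q_k$ (valid since $\lambda\in\mathbb R$), hence $Q_k p(J_k(\lambda)) = p(J_k(\lambda))^* Q_k$, so $(\eta Q_k)R=R^*(\eta Q_k)$. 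Now take $\tilde A$ on this block to be $R\oplus R\in\mathbb C^{2k\times2k}$; then $\tilde A^m=J_k(\lambda)\oplus J_k(\lambda)$ and $\tilde A$ is $(\eta Q_k\oplus\eta Q_k)$-selfadjoint.

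The one genuine subtlety is verifying that $\tilde A=R\oplus R$ actually lies in $\Omega_{2k}$, i.e.\ has the form $\begin{bmatrix}A_1&\bar A_2\\-A_2&\bar A_1\end{bmatrix}$. Because $R$ has \emph{real} entries (it is a real polynomial in the real matrix $J_k(\lambda)$), we may write $R\oplus R$ with $A_1:=R$ and $A_2:=0$; then $\bar A_1=R$ and $\bar A_2=0$, so $R\oplus R=\begin{bmatrix}R&0\\0&R\end{bmatrix}\in\Omega_{2k}$, as required. Equivalently, $R\oplus R=\omega_k(R)$ viewing $R\in\mathbb R^{k\times k}\subseteq\mathbb H^{k\times k}$, which is manifestly in $\Omega_{2k}$. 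This is the step I expect to be the main obstacle to get cleanly: one must make sure the root chosen on the block is a \emph{real} (or at least $\Omega$-respecting) matrix rather than an arbitrary complex primary root, since an arbitrary complex $m$th root of $J_k(\lambda)$ need not sit in the subalgebra $\Omega_{2k}$. Having checked this, assembling the blocks via Lemma~\ref{LemBlockPerBlock} and undoing the canonical-form similarity via Lemma~\ref{Lem2.1recall} completes the proof; note no sign or parity conditions arise here, consistent with the statement of Theorem~\ref{ThmN&SconditionsOmegaB} imposing no constraint on the positive-eigenvalue part.
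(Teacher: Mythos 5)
Your proof is correct, but the core construction is genuinely different from the paper's. The paper, after the same reduction to a single canonical block $(\tilde B,\tilde H)=(J_n(\lambda)\oplus J_n(\lambda),\,\varepsilon Q_n\oplus\varepsilon Q_n)$, starts from the Jordan form of the intended root, $J=J_n(\mu)\oplus J_n(\mu)$ with $\mu=\lambda^{1/m}>0$, and then explicitly builds a structured similarity $P=P_1\oplus\bar P_1\in\Omega_{2n}$ (columns $\left((J_n(\mu))^m-\lambda I\right)^{j}y$ with a real vector $y$ solving the lower anti-triangular Hankel conditions $\phi_{n,j}(y)=\delta_{1j}$) so that $P^{-1}J^mP=\tilde B$ and $P^*\tilde HP=\tilde H$, and finally sets $\tilde A=P^{-1}JP$ via Lemma~\ref{Lem2.1recall}. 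You instead take the primary (principal-branch) $m$th root $R=p(J_k(\lambda))$, a real upper-triangular Toeplitz polynomial in the block, observe that $R^m=J_k(\lambda)$ exactly, that $Q_kJ_k(\lambda)=J_k(\lambda)^*Q_k$ and real coefficients give $Q_kR=R^*Q_k$, and that realness of $R$ puts $R\oplus R$ in $\Omega_{2k}$ (with $A_1=R$, $A_2=0$); no similarity needs to be constructed at all. Your route is the finite, matrix-function incarnation of the Cauchy-integral remark in the paper's introduction (valid precisely because the spectrum avoids $(-\infty,0]$), and it correctly supplies the two checks that remark leaves open, namely $\tilde H$-selfadjointness and membership of $\Omega_{2n}$; it is arguably shorter and more explicit for this case. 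What the paper's construction buys is uniformity: the same template (choose $J$, build $P$ with $P^{-1}J^mP=\tilde B$, $P^*\tilde HP=\tilde H$, apply Lemma~\ref{Lem2.1recall}) carries over to the nonreal, negative and nilpotent cases, where a primary-function root either fails to exist or does not respect the required structure, whereas your functional-calculus shortcut is special to spectra in $(0,\infty)$ (and, with the odd-$m$ real root, to $(-\infty,0)$). Your block-assembly step via Lemma~\ref{LemBlockPerBlock} and the permutation \eqref{eqPermutationMatrix} matches the paper's implicit reduction, so no gap there.
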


\begin{proof}
Let $\tilde{B}\in\Omega_{2n}$ be $\tilde{H}$-selfadjoint with only positive real eigenvalues, where $\tilde{H}\in\Omega_{2n}$ is invertible and Hermitian. We can assume that $\tilde{B}=J_n(\lambda)\oplus J_n(\lambda)$, where $\lambda$ is a positive real number, and that $\tilde{H}=\varepsilon Q_n\oplus\varepsilon Q_n$, $\varepsilon=\pm1$.  To construct an $m$th root of $\tilde{B}$ which is $\tilde{H}$-selfadjoint and also in $\Omega_{2n}$, we let $J=J_n(\mu)\oplus J_n(\mu)$ where $\mu$ is the positive real $m$th root of $\lambda$. Then both $J$ and $J^m=(J_n(\mu))^m\oplus (J_n(\mu))^m$ are $\tilde{H}$-selfadjoint, and the Jordan normal form of $J^m$ is equal to $\tilde{B}$. We now wish to find an invertible matrix $P\in\Omega_{2n}$ such that equations 
\begin{equation}\label{eq2PeqPoseig}
P^{-1}J^mP=\tilde{B}\quad\textup{and}\quad P^*\tilde{H}P=\tilde{H}
\end{equation}
hold. To ensure that the first equation holds, let $P=P_1\oplus P_2$ with
\begin{equation*}
\begin{split}
P_1=\begin{bmatrix}
\left((J_n(\mu))^m-\lambda I\right)^{n-1}y & \cdots & \left((J_n(\mu))^m-\lambda I\right)y & y
\end{bmatrix},\\
P_2=\begin{bmatrix}
\left((J_n(\mu))^m-\lambda I\right)^{n-1}z & \cdots & \left((J_n(\mu))^m-\lambda I\right)z & z
\end{bmatrix},
\end{split}
\end{equation*}
where $y,z\in\textup{Ker}\left((J_n(\mu))^m-\lambda I\right)^n=\mathbb{C}^n$ but $y,z\notin\textup{Ker}\left((J_n(\mu))^m-\lambda I\right)^{n-1}$. From the choices of $y$ and $z$ it is clear that $P_1$ and $P_2$ are invertible $n\times n$ matrices and hence the $2n\times 2n$ matrix $P$ is invertible. Let $z=\bar{y}$, so that $P_2=\bar{P}_1$; then $P$ is in $\Omega_{2n}$. Note that $P^*\tilde{H}P=\tilde{H}$ if and only if $P_1^*Q_nP_1=Q_n$ and since $P_1^*Q_nP_1$ is a lower anti-triangular Hankel matrix, $P^*\tilde{H}P=\tilde{H}$ holds if and only if
\begin{equation*}
\phi_{n,j}(y)=[p_j,p_n]=y^*Q_n\left((J_n(\mu))^m-\lambda I\right)^{n-j}y=\begin{cases}
1 & \textup{if }j=1, \\
0 & \textup{if }j=2,\ldots,n,
\end{cases}
\end{equation*} where $\phi_{i,j}(y)$ denotes the entries in the matrix $P_1^*Q_nP_1$ and $p_i$ denotes the columns in the matrix $P_1$. As illustrated in \cite{mthroots}, one can easily find one solution to these equations by assuming that $y$ is real. Therefore there exists a matrix $P\in\Omega_{2n}$ satisfying the equations \eqref{eq2PeqPoseig}. Then by using Lemma~\ref{Lem2.1recall}, the matrix $\tilde{A}:=P^{-1}JP$ is an $\tilde{H}$-selfadjoint $m$th root of $\tilde{B}$, and $\tilde{A}$ is also in $\Omega_{2n}$ since $P$ is.
\end{proof}

Now, for the nonreal eigenvalue case. 
\begin{theorem}\label{ThmExistenceNonreal}
Let $\tilde{B},\tilde{H}\in\Omega_{4n}$, where $\tilde{H}$ is invertible and Hermitian, and $\tilde{B}$ is $\tilde{H}$-selfadjoint with a spectrum consisting of only nonreal numbers. Then there exists an $\tilde{H}$-selfadjoint matrix in $\Omega_{4n}$, say $\tilde{A}$, such that $\tilde{A}^m=\tilde{B}$. 
\end{theorem}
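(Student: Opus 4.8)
The plan is to reduce to a single Jordan structure as in the positive case, but now the nonreal spectrum forces a block of the form $J_k(\lambda)\oplus J_k(\bar\lambda)$ (with $\lambda\in\mathbb{C}_+$), and — because of the doubling inherent in $\Omega_{2n}$ recorded in Theorem~\ref{ThmcanonformOmega} — this whole block occurs twice. So by Lemma~\ref{LemBlockPerBlock} and the permutation~\eqref{eqPermutationMatrix} it suffices to treat
\[
\tilde B = \bigl(J_k(\lambda)\oplus J_k(\bar\lambda)\bigr)\oplus\bigl(J_k(\lambda)\oplus J_k(\bar\lambda)\bigr),
\qquad
\tilde H = Q_{2k}\oplus Q_{2k},
\]
which is precisely $\omega_k$ applied to the quaternion pair $\bigl(J_k(\lambda)\oplus J_k(\bar\lambda),\,Q_{2k}\bigr)$. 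First I would pick $\mu\in\mathbb{C}_+$ to be \emph{a} (complex) $m$th root of $\lambda$ — any fixed choice of branch works, since unlike the real negative case there is no parity obstruction when the eigenvalue is genuinely nonreal — and set $J=J_k(\mu)\oplus J_k(\bar\mu)\oplus J_k(\mu)\oplus J_k(\bar\mu)$. Then $J\in\Omega_{2k}$ (it is $\omega_k$ of a quaternion Jordan block at $\mu$), $J$ is $\tilde H$-selfadjoint, $J^m$ has the same Jordan normal form as $\tilde B$, and $J^m$ is again $\tilde H$-selfadjoint.

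The remaining task is to produce an invertible $P\in\Omega_{2k}$ with $P^{-1}J^mP=\tilde B$ and $P^*\tilde H P=\tilde H$, after which Lemma~\ref{Lem2.1recall} finishes the argument exactly as in the proof of Theorem~\ref{ThmExistencePosEig}. I would build $P$ column-block by column-block from a cyclic vector: on the $J_k(\mu)$ summand take the Krylov-type matrix with columns $\bigl((J_k(\mu))^m-\lambda I\bigr)^{k-1}y,\ldots,\bigl((J_k(\mu))^m-\lambda I\bigr)y,\ y$ for $y\in\ker\bigl((J_k(\mu))^m-\lambda I\bigr)^k\setminus\ker\bigl((J_k(\mu))^m-\lambda I\bigr)^{k-1}$, and on the $J_k(\bar\mu)$ summand take the conjugate construction with $\bar\lambda$ in place of $\lambda$; this pairing of a block with its conjugate is exactly what keeps $P$ inside $\Omega_{2k}$. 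The condition $P^*\tilde H P=\tilde H$ reduces, as before, to the Hankel equations $y^*Q_k\bigl((J_k(\mu))^m-\lambda I\bigr)^{k-j}y=\delta_{j1}$ for $j=1,\ldots,k$ on the $\lambda$-block (and their conjugates on the $\bar\lambda$-block, which hold automatically once the $\lambda$-block equations do). Note that the matrix $\bigl((J_k(\mu))^m-\lambda I\bigr)$ on $\ker\bigl((J_k(\mu))^m-\lambda I\bigr)^k$ is a nilpotent Jordan-like matrix of rank $k-1$, so these are $k$ polynomial equations in the $k$ complex unknowns $y$ with a nonsingular leading structure, and one checks they are solvable; since we are no longer over $\mathbb{R}$ there is no need to worry whether $y$ can be taken real.

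The only genuinely delicate point, and the one I would spend the most care on, is bookkeeping the $\Omega$-membership at every stage: one must verify that $J$, $J^m$, $P$, and ultimately $\tilde A=P^{-1}JP$ all lie in $\Omega_{2k}$, which is why the construction must always treat a $\mu$-block together with its $\bar\mu$-conjugate block and why the permutation~\eqref{eqPermutationMatrix} is invoked to re-group the doubled canonical form of Theorem~\ref{ThmcanonformOmega} into the shape required by Lemma~\ref{LemBlockPerBlock}. The analytic content (solving the Hankel equations, invertibility of $P$) is essentially identical to the complex treatment in \cite{mthroots}; the new work is purely the algebra of staying inside the subalgebra.
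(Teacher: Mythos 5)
Your overall strategy is the same as the paper's (reduce to one canonical block, take $J$ built from an $m$th root $\mu$ of $\lambda$, construct a structure-preserving similarity $P$ from Krylov columns, finish with Lemma~\ref{Lem2.1recall}), but two concrete steps are wrong, and both occur exactly at the point you yourself identify as the crux, namely staying inside the subalgebra. First, the ordering of the conjugate blocks: by Theorem~\ref{ThmcanonformOmega} the second copy of the nonreal block appears with the conjugates swapped, so the model pair is $\tilde B=J_k(\lambda)\oplus J_k(\bar\lambda)\oplus J_k(\bar\lambda)\oplus J_k(\lambda)$, $\tilde H=Q_{2k}\oplus Q_{2k}$. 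Your $\tilde B=\bigl(J_k(\lambda)\oplus J_k(\bar\lambda)\bigr)\oplus\bigl(J_k(\lambda)\oplus J_k(\bar\lambda)\bigr)$ is \emph{not} in $\Omega_{4k}$: a block-diagonal matrix $D_1\oplus D_2$ with $D_i\in\mathbb{C}^{2k\times 2k}$ lies in $\Omega_{4k}$ only if $D_2=\bar D_1$, and $\overline{J_k(\lambda)\oplus J_k(\bar\lambda)}=J_k(\bar\lambda)\oplus J_k(\lambda)\neq J_k(\lambda)\oplus J_k(\bar\lambda)$ for nonreal $\lambda$; moreover the permutation fixing this is itself not in $\Omega_{4k}$, so it cannot be waved away. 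The same defect propagates to your $J=J_k(\mu)\oplus J_k(\bar\mu)\oplus J_k(\mu)\oplus J_k(\bar\mu)$ and to the resulting $P=P_1\oplus\bar P_1\oplus P_1\oplus\bar P_1$, none of which lie in $\Omega_{4k}$; the paper's choices $J=J_k(\mu)\oplus J_k(\bar\mu)\oplus J_k(\bar\mu)\oplus J_k(\mu)$ and $P=P_1\oplus\bar P_1\oplus\bar P_1\oplus P_1$ do. (Your dimension labels $\Omega_{2k}$ for $4k\times 4k$ matrices are a further, minor, slip.)

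Second, the reduction of $P^*\tilde HP=\tilde H$ is not the sesquilinear system you state. Since $Q_{2k}$ is anti-diagonal it couples the $\mu$-block with its $\bar\mu$-partner: for $P_1\oplus\bar P_1$ acting on one $2k\times 2k$ half one gets
\begin{equation*}
(P_1\oplus\bar P_1)^*Q_{2k}(P_1\oplus\bar P_1)=\begin{bmatrix}0 & P_1^*Q_k\bar P_1\\ \bar P_1^*Q_kP_1 & 0\end{bmatrix},
\end{equation*}
so the condition is $P_1^{T}Q_kP_1=Q_k$, a bilinear (transpose) equation, not $P_1^*Q_kP_1=Q_k$; accordingly the Hankel equations must read $y^{T}Q_k\bigl((J_k(\mu))^m-\lambda I\bigr)^{k-j}y=\delta_{j1}$, whose solvability is exactly what the proof of Theorem~2.4 in \cite{mthroots} supplies (this is also why the ``take $y$ real'' device of the positive case is irrelevant here). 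Your sesquilinear equations are the reduction appropriate to the real-eigenvalue case and do not express $P^*\tilde HP=\tilde H$ in the nonreal setting. Both errors are repairable, and the repaired argument coincides with the paper's proof, but as written the proposal does not establish that $P$, and hence $\tilde A=P^{-1}JP$, lies in $\Omega_{4n}$, nor that $P$ preserves $\tilde H$.
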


\begin{proof}
Let $\tilde{B}\in\Omega_{4n}$ be $\tilde{H}$-selfadjoint with only nonreal eigenvalues, where $\tilde{H}\in\Omega_{4n}$ is invertible and Hermitian. We can assume that $\tilde{B}=J_n(\lambda)\oplus J_n(\bar{\lambda})\oplus J_n(\bar{\lambda})\oplus J_n(\lambda)$, where $\lambda$ is a nonreal number, and that $\tilde{H}=Q_{2n}\oplus Q_{2n}$.  To construct an $m$th root of $\tilde{B}$, let $J=J_n(\mu)\oplus J_n(\bar{\mu})\oplus J_n(\bar{\mu})\oplus J_n(\mu)$ where $\mu$ is any $m$th root of $\lambda$. Then the Jordan normal form of $J^m$ is equal to $\tilde{B}$, and both $J$ and $J^m$ are $\tilde{H}$-selfadjoint. Let $P=P_1\oplus \bar{P}_1\oplus \bar{P}_1\oplus P_1\in\Omega_{4n}$ where 
\begin{equation*}
P_1=\begin{bmatrix}
\left((J_n(\mu))^m-\lambda I\right)^{n-1}y & \cdots & \left((J_n(\mu)^m-\lambda I\right)y & y
\end{bmatrix},
\end{equation*}
with $y\in\textup{Ker}\left((J_n(\mu))^m-\lambda I\right)^n=\mathbb{C}^n$ but $y\notin\textup{Ker}\left((J_n(\mu))^m-\lambda I\right)^{n-1}$. Then $P^*\tilde{H}P=\tilde{H}$ if and only if $P_1^TQ_nP_1=Q_n$, and according to the proof of Theorem~2.4 in \cite{mthroots}, there exists a solution to the latter equation. Hence, there exists an invertible matrix $P\in\Omega_{4n}$ such that the equations
\begin{equation*}
P^{-1}J^mP=\tilde{B}\quad\textup{and}\quad P^*\tilde{H}P=\tilde{H}
\end{equation*}
hold. Once again, by Lemma~\ref{Lem2.1recall}, the matrix $\tilde{A}:=P^{-1}JP$ is an $\tilde{H}$-selfadjoint $m$th root of $\tilde{B}$ and is in $\Omega_{4n}$.
\end{proof}

Before stating the results for the negative case, we give the following result which was obtained by applying $\omega_n$ to all matrices in Lemma~2.12 of \cite{mthroots}.
\begin{lemma}\label{Lem2.12recall}
Let $T\in\Omega_{2n}$ be a diagonal block matrix consisting of an upper triangular Toeplitz matrix with diagonal entries $t_1,\ldots,t_n$ and its complex conjugate. Let the diagonal entries $\lambda=t_1$ be real, $t_2$ nonzero and 
$B=T\oplus \bar{T}$. Then $B$ is $(Q_{2n}\oplus Q_{2n})$-selfadjoint, and the pair $(B, Q_{2n}\oplus Q_{2n})$ is unitarily similar to 
$$(J_n(\lambda) \oplus J_n(\lambda)\oplus J_n(\lambda) \oplus J_n(\lambda), Q_n \oplus -Q_n\oplus Q_n \oplus -Q_n).$$ 
\end{lemma}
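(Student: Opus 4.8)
The plan is to reduce the quaternion (or rather $\Omega_{2n}$) statement to the corresponding complex result and then package it back into the subalgebra. First I would recall the underlying complex fact (essentially Lemma~2.12 of \cite{mthroots}): if $T$ is an upper triangular Toeplitz matrix with constant diagonal entry $\lambda=t_1\in\reals$ and $t_2\neq 0$, then $T$ is $Q_n$-selfadjoint and the pair $(T,Q_n)$ is unitarily similar over $\complex$ to $(J_n(\lambda),Q_n)$ via some invertible $R\in\complex^{n\times n}$, i.e.\ $R^{-1}TR=J_n(\lambda)$ and $R^*Q_nR=Q_n$; the hypothesis $t_2\neq 0$ is exactly what guarantees a single Jordan block. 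Since $T$ has real diagonal, $\bar T$ is $Q_n$-selfadjoint as well, and conjugating the complex similarity gives $\bar R^{-1}\bar T\bar R=J_n(\bar\lambda)=J_n(\lambda)$ (using $\lambda\in\reals$) with $\bar R^*Q_n\bar R=Q_n$.

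Next I would verify the $\Omega_{2n}$-membership and the selfadjointness claims directly. The matrix $B=T\oplus\bar T$ together with $H:=Q_{2n}\oplus Q_{2n}$: one checks $B\in\Omega_{2n}$ by writing $B=B_1+\J B_2$ and observing that $B$ already has the block shape $\begin{bmatrix}B_1&0\\0&\bar B_1\end{bmatrix}$ with $B_2=0$, so $\omega_n^{-1}$ applies; similarly $Q_{2n}\oplus Q_{2n}$ lies in $\Omega_{2n}$ and is Hermitian and invertible. That $B$ is $H$-selfadjoint follows blockwise from $Q_nT=T^*Q_n$ (the definition of $Q_n$-selfadjointness of $T$) and the analogous identity for $\bar T$; this is the content of "$B$ is $(Q_{2n}\oplus Q_{2n})$-selfadjoint."

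Then I would assemble the similarity. Set $S:=R\oplus\bar R$. Since $R\in\complex^{n\times n}$, the matrix $S$ has the block form placing it in $\Omega_{2n}$ (again with vanishing $\J$-part), and $S$ is invertible because $R$ is. A direct computation gives
\begin{equation*}
S^{-1}(T\oplus\bar T)S=(R^{-1}TR)\oplus(\bar R^{-1}\bar T\bar R)=J_n(\lambda)\oplus J_n(\lambda),
\end{equation*}
\begin{equation*}
S^*(Q_{2n}\oplus Q_{2n})S=(R^*Q_nR)\oplus(\bar R^*Q_n\bar R)=Q_n\oplus Q_n.
\end{equation*}
This already shows $(B,H)$ is unitarily similar within $\Omega_{2n}$ to $(J_n(\lambda)\oplus J_n(\lambda),\,Q_n\oplus Q_n)$. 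Finally, one rewrites $Q_n\oplus Q_n$ in the signature-normalized canonical form $Q_n\oplus -Q_n$: the pairs $(J_n(\lambda)\oplus J_n(\lambda),Q_n\oplus Q_n)$ and $(J_n(\lambda)\oplus J_n(\lambda),Q_n\oplus -Q_n)$ are unitarily similar over $\complex$ (swap the sign on one block by conjugating the second $J_n(\lambda)$ with a suitable $D$ with $D^*Q_nD=-Q_n$, e.g.\ $D=\mathsf{i}I$, which commutes with $J_n(\lambda)$), and doing this in both summands of $\Omega_{2n}$ — i.e.\ conjugating by $(\mathsf{i}I_n)\oplus(-\mathsf{i}I_n)\in\Omega_{2n}$ — yields the stated target $(J_n(\lambda)^{\oplus 4},\,Q_n\oplus -Q_n\oplus Q_n\oplus -Q_n)$ after recombining with the doubling already present.

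The main obstacle is bookkeeping rather than mathematics: one must take care at every step that the conjugating matrices stay inside $\Omega_{2n}$ (this is why we use $R\oplus\bar R$ rather than an arbitrary complex similarity and why the sign-flip is done symmetrically with $\mathsf{i}I\oplus -\mathsf{i}I$), and one must keep the two layers of doubling — the doubling intrinsic to $\Omega_{2n}$ via $\omega_n(J_n(\lambda))=J_n(\lambda)\oplus J_n(\bar\lambda)$ and the doubling coming from $B=T\oplus\bar T$ — straight so that the final count of four $J_n(\lambda)$ blocks and the sign pattern $Q_n\oplus -Q_n\oplus Q_n\oplus -Q_n$ come out exactly as stated.
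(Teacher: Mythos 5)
Your reduction rests on a false ``underlying complex fact'', and the error is exactly at the point this lemma exists to control. An upper triangular Toeplitz matrix $T_0$ with real diagonal $\lambda$ but nonreal off-diagonal entries is \emph{not} $Q_n$-selfadjoint: since $Q_nT_0Q_n=T_0^T$ for Toeplitz matrices, $Q_nT_0=T_0^*Q_n$ holds iff $T_0=\bar T_0$, and in the intended application ($T_0=(J_n(\mu))^m$ with $\mu$ a nonreal $m$th root of a negative number) $T_0$ is not real. The selfadjointness of $B$ in the lemma comes instead from the sip matrix $Q_{2n}$ coupling $T_0$ \emph{anti-diagonally} with $\bar T_0$ (one checks $Q_n\bar T_0Q_n=T_0^*$); your blockwise pairing of each Toeplitz block with its own $Q_n$ misreads this structure, and also the sizes: $T\in\Omega_{2n}$ is already $T_0\oplus\bar T_0$, so $B$ is $4n\times 4n$ and $Q_{2n}\oplus Q_{2n}$ is not a direct sum of $Q_n$'s aligned with the four $n\times n$ diagonal blocks. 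Moreover, even for real $T_0$ the similarity $(T_0,Q_n)\sim(J_n(\lambda),\varepsilon Q_n)$ carries the sign $\varepsilon=\mathrm{sign}(t_2)$, not always $+1$.

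The final step is also impossible as stated: the sign characteristic is an invariant under unitary similarity (uniqueness in the canonical form), so $(J_n(\lambda)\oplus J_n(\lambda),\,Q_n\oplus Q_n)$ is \emph{not} unitarily similar to $(J_n(\lambda)\oplus J_n(\lambda),\,Q_n\oplus -Q_n)$; there is no $D$ with $D^*Q_nD=-Q_n$ commuting into the picture, and your explicit choice $D=\I I_n$ gives $D^*Q_nD=(-\I)(\I)Q_n=+Q_n$. Had this sign-flip been available, the lemma (and with it the necessity direction of Theorem~\ref{ThmExistenceNegEven}) would collapse, since one could then realize the sign pattern $Q_n\oplus Q_n$ as well. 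The paper obtains the lemma differently: it invokes Lemma~2.12 of \cite{mthroots}, whose content is precisely that the coupled pair $(T_0\oplus\bar T_0,\,Q_{2n})$ is unitarily similar to $(J_n(\lambda)\oplus J_n(\lambda),\,Q_n\oplus -Q_n)$ with one $+$ and one $-$ forced, and then applies the isomorphism $\omega$ to all matrices, which doubles everything and yields the four blocks and the sign pattern $Q_n\oplus -Q_n\oplus Q_n\oplus -Q_n$. To repair your argument you would have to prove that coupled complex statement (or cite it), not reduce block-by-block to $(J_n(\lambda),Q_n)$.
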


Turning to the case of the negative eigenvalue for a matrix $\tilde{B}$, we first point out that if $m$ is even, any $m$th root $\tilde{A}$ will necessarily have only complex eigenvalues, so in order for $\tilde{A}$ to be $\tilde{H}$-selfadjoint, by Theorem~\ref{ThmcanonformOmega}, it (and so also $\tilde{B}$)  would have to be in $\Omega_{4n}$. We now first prove a result for $m$ even.
\begin{theorem}\label{ThmExistenceNegEven}
Let $\tilde{H}\in\Omega_{4n}$ be an invertible Hermitian matrix and let $\tilde{B}\in\Omega_{4n}$ be an $\tilde{H}$-selfadjoint matrix with a spectrum consisting of only negative real numbers. Then, for an even positive integer $m$, there exists an $\tilde{H}$-selfadjoint matrix in $\Omega_{4n}$, say $\tilde{A}$, such that $\tilde{A}^m=\tilde{B}$ if and only if the canonical form of $(\tilde{B},\tilde{H})$ is given by
\begin{equation}\label{eqcanonformNegB}
S^{-1}\tilde{B}S=\bigoplus_{j=1}^t\left(J_{k_j}(\lambda_j)\oplus J_{k_j}(\lambda_j)\right)\oplus \bigoplus_{j=1}^t\left(J_{k_j}(\lambda_j)\oplus J_{k_j}(\lambda_j)\right),
\end{equation}
and
\begin{equation}\label{eqcanonformNegH}
S^{*}\tilde{H}S=\bigoplus_{j=1}^t\left(Q_{k_j}\oplus -Q_{k_j}\right)\oplus \bigoplus_{j=1}^t\left(Q_{k_j}\oplus -Q_{k_j}\right),
\end{equation}
where $\lambda_j<0$ and $S$ is some invertible matrix in $\Omega_{4n}$.
\end{theorem}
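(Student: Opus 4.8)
The plan is to prove both directions after two standard reductions. By Lemma~\ref{Lem2.1recall} we may assume $(\tilde{B},\tilde{H})$ is already in canonical form, and by Lemma~\ref{LemBlockPerBlock} together with the permutation matrix~\eqref{eqPermutationMatrix} it suffices to treat a single ``atom''
\[
\left(J_k(\lambda)\oplus J_k(\lambda)\oplus J_k(\lambda)\oplus J_k(\lambda),\ Q_k\oplus -Q_k\oplus Q_k\oplus -Q_k\right)\in\Omega_{4k},
\]
with $\lambda<0$; this pair genuinely lies in $\Omega_{4k}$, being $\omega_{2k}$ of the quaternion pair $\left(J_k(\lambda)\oplus J_k(\lambda),\,Q_k\oplus -Q_k\right)$. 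The computational input, to be combined with Lemma~\ref{Lem2.12recall}, is the elementary observation that for any (necessarily nonreal) $m$th root $\mu$ of $\lambda$ the matrix $\left(J_k(\mu)\right)^m$ is an upper triangular Toeplitz matrix with diagonal entry $\lambda<0$ and first superdiagonal entry $m\mu^{m-1}\neq 0$.

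For necessity, suppose $\tilde{A}\in\Omega_{4n}$ is $\tilde{H}$-selfadjoint with $\tilde{A}^m=\tilde{B}$. Since $m$ is even and $\sigma(\tilde{B})\subset(-\infty,0)$, a real eigenvalue $\nu$ of $\tilde{A}$ would force the eigenvalue $\nu^m\geq 0$ of $\tilde{B}$ to be negative, which is absurd; hence $\sigma(\tilde{A})\cap\mathbb{R}=\emptyset$, and by Theorem~\ref{ThmcanonformOmega} the canonical form of $(\tilde{A},\tilde{H})$ is built only from the nonreal blocks. Assuming $(\tilde{A},\tilde{H})$ is in this form, group each first-half block $J_{k_i}(\mu_i)\oplus J_{k_i}(\bar{\mu}_i)$ with its second-half conjugate $J_{k_i}(\bar{\mu}_i)\oplus J_{k_i}(\mu_i)$; raising to the $m$th power sends this grouped pair to $\left(T_i\oplus\bar{T}_i,\ Q_{2k_i}\oplus Q_{2k_i}\right)$ with $T_i=\left(J_{k_i}(\mu_i)\right)^m\oplus\overline{\left(J_{k_i}(\mu_i)\right)^m}\in\Omega_{2k_i}$. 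By the Toeplitz observation and Lemma~\ref{Lem2.12recall}, this pair is unitarily similar to $\left(J_{k_i}(\lambda_i)\oplus J_{k_i}(\lambda_i)\oplus J_{k_i}(\lambda_i)\oplus J_{k_i}(\lambda_i),\ Q_{k_i}\oplus -Q_{k_i}\oplus Q_{k_i}\oplus -Q_{k_i}\right)$ with $\lambda_i=\mu_i^m<0$. Summing over $i$ and reordering summands by~\eqref{eqPermutationMatrix} (which keeps everything inside $\Omega_{4n}$) puts $(\tilde{B},\tilde{H})$ into the shape~\eqref{eqcanonformNegB}--\eqref{eqcanonformNegH}, which by the uniqueness statement in Theorem~\ref{ThmcanonformOmega} is its canonical form.

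For sufficiency, assume the canonical form of $(\tilde{B},\tilde{H})$ is~\eqref{eqcanonformNegB}--\eqref{eqcanonformNegH} and reduce, as above, to the atom. Pick an $m$th root $\mu$ of $\lambda$ and set $J=J_k(\mu)\oplus J_k(\bar{\mu})\oplus J_k(\bar{\mu})\oplus J_k(\mu)\in\Omega_{4k}$, which is $\left(Q_{2k}\oplus Q_{2k}\right)$-selfadjoint; running the previous computation backwards, $J^m=T\oplus\bar{T}$ with $T=\left(J_k(\mu)\right)^m\oplus\overline{\left(J_k(\mu)\right)^m}$, and Lemma~\ref{Lem2.12recall} shows that $\left(J^m,\,Q_{2k}\oplus Q_{2k}\right)$ has the same canonical form as the atom. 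Since Lemma~\ref{Lem2.12recall} is obtained by applying $\omega$ to its complex counterpart, the unitary similarity realizing this may be taken inside $\Omega_{4k}$; calling it $P$, Lemma~\ref{Lem2.1recall} shows that $\tilde{A}:=P^{-1}JP\in\Omega_{4k}$ is an $\tilde{H}$-selfadjoint $m$th root of the atom, and Lemma~\ref{LemBlockPerBlock} together with~\eqref{eqPermutationMatrix} reassembles these into the required $\tilde{H}$-selfadjoint $m$th root of $\tilde{B}$ in $\Omega_{4n}$.

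The step I expect to be most delicate is not the Jordan-block algebra, which Lemma~\ref{Lem2.12recall} handles cleanly, but the bookkeeping that keeps every matrix inside $\Omega$: one must be sure the nonreal part of $\tilde{A}$'s canonical form occurs in the conjugate-paired, doubled pattern of Theorem~\ref{ThmcanonformOmega}, that the reordering permutation~\eqref{eqPermutationMatrix} maps $\Omega$ into $\Omega$, and --- the only genuinely non-routine point --- that the similarity provided by Lemma~\ref{Lem2.12recall} can be chosen in $\Omega_{4k}$ and not merely in $\mathbb{C}^{4k\times 4k}$. This is precisely what makes it necessary to run the whole argument inside $\Omega$ rather than simply quote the complex result of \cite{mthroots}.
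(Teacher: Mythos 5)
Your proposal is correct and follows essentially the same route as the paper: reduce to canonical form, use that an even power forces $\tilde{A}$ to have only nonreal eigenvalues, pair each block $\left(J_{k}(\mu)\right)^m\oplus\overline{\left(J_{k}(\mu)\right)^m}$ with its conjugate and invoke Lemma~\ref{Lem2.12recall} for necessity, and run the same construction backwards with Lemma~\ref{Lem2.1recall} and the permutation \eqref{eqPermutationMatrix} for sufficiency. Your explicit attention to keeping the similarity inside $\Omega_{4k}$ is exactly the point the paper also relies on, so no gap remains.
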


\begin{proof}
Let $\tilde{B}$ be an $\tilde{H}$-selfadjoint matrix where both $\tilde{B}$ and $\tilde{H}$ are in $\Omega_{4n}$ and $\tilde{B}$ has only negative eigenvalues. Assume that there exists an $\tilde{H}$-selfadjoint $m$th root $\tilde{A}\in\Omega_{4n}$ of $\tilde{B}$, i.e.\ $\tilde{A}^m=\tilde{B}$. Denote the eigenvalues of $\tilde{B}$ by $\lambda_j$, and let $\mu_j$ be any $m$th root of $\lambda_j$. Since $m$ is even, $\mu_j$ is nonreal. Thus from Theorem~\ref{ThmcanonformOmega} we know that the canonical form of $(\tilde{A},\tilde{H})$ is $(J,Q)$ where
\begin{equation*}
J=\bigoplus_{j=1}^t\left(J_{k_j}(\mu_j)\oplus J_{k_j}(\bar{\mu}_j)\right)\oplus\bigoplus_{j=1}^t \left(J_{k_j}(\bar{\mu}_j)\oplus J_{k_j}(\mu_j)\right)
\end{equation*}
and
\begin{equation*}
Q=\bigoplus_{j=1}^tQ_{2k_j}\oplus\bigoplus_{j=1}^t Q_{2k_j}.
\end{equation*}
for some $t$. Hence there exists an invertible matrix $P\in\Omega_{4n}$ such that the equations $P^{-1}\tilde{A}P=J$ and $P^*\tilde{H}P=Q$ hold. Consider
\begin{equation*}
P^{-1}\tilde{B}P=(P^{-1}\tilde{A}P)^m=J^m=\bigoplus_{j=1}^t\left((J_{k_j}(\mu_j))^m\oplus (J_{k_j}(\bar{\mu}_j))^m\right)\oplus\bigoplus_{j=1}^t\left( (J_{k_j}(\bar{\mu}_j))^m\oplus (J_{k_j}(\mu_j))^m\right),
\end{equation*}
 and note that this matrix is $P^*\tilde{H}P$-selfadjoint. Next, by taking 
 $$T_j=(J_{k_j}(\mu_j))^m\oplus (J_{k_j}(\bar{\mu}_j))^m$$ 
for $j=1,\ldots,t$, and applying Lemma~\ref{Lem2.12recall} we have  that $(T_j\oplus \bar{T}_j,\;Q_{2k_j}\oplus Q_{2k_j})$ is unitarily similar to 
 \begin{equation*}
 \left( J_{k_j}(\lambda_j)\oplus J_{k_j}(\lambda_j)\oplus J_{k_j}(\lambda_j)\oplus J_{k_j}(\lambda_j),\;Q_{k_j}\oplus -Q_{k_j}\oplus Q_{k_j}\oplus -Q_{k_j}\right)
 \end{equation*}
for all $j=1,\ldots,t$. Therefore by using the permutation matrix defined in \eqref{eqPermutationMatrix}, we see that the canonical form of $(\tilde{B},\tilde{H})$ is given by \eqref{eqcanonformNegB} and \eqref{eqcanonformNegH}.

Conversely, let $\tilde{B}$ be $\tilde{H}$-selfadjoint and such that the canonical form of $(\tilde{B},\tilde{H})$ is given by \eqref{eqcanonformNegB} and \eqref{eqcanonformNegH}, and let $m$ be even. Assume that 
\begin{equation*}
\tilde{B}=J_n(\lambda)\oplus J_n(\lambda)\oplus J_n(\lambda)\oplus J_n(\lambda)\quad\textup{and}\quad \tilde{H}=Q_n\oplus -Q_n\oplus Q_n\oplus -Q_n,
\end{equation*}
where $\lambda<0$. Then let $J=J_n(\mu)\oplus J_n(\bar{\mu})\oplus J_n(\bar{\mu})\oplus J_n(\mu)$ where $\mu$ is any $m$th root of $\lambda$. The number $\mu$ is nonreal and therefore $J$ is $Q$-selfadjoint where $Q=Q_{2n}\oplus Q_{2n}$. Note that the matrix $J^m$ has $\lambda$ on its main diagonal and satisfies the conditions of $T\oplus \bar{T}$ as in  Lemma~\ref{Lem2.12recall}. Thus it follows that the pair $(J^m,Q_{2n}\oplus Q_{2n})$ is unitarily similar to the pair $(\tilde{B},\tilde{H})$, i.e., there exists an invertible matrix $P\in\Omega_{4n}$ such that the equations $P^{-1}J^mP=\tilde{B}$ and $P^*QP=\tilde{H}$ hold. Finally, from Lemma~\ref{Lem2.1recall} the matrix $\tilde{A}:=P^{-1}JP\in\Omega_{4n}$ is an $\tilde{H}$-selfadjoint $m$th root of $\tilde{B}$.
\end{proof}

Next we give the negative eigenvalue case where $m$ is odd. This case is similar to the positive eigenvalue case.
\begin{theorem}\label{ThmExistenceNegOdd}
Let $\tilde{H}\in\Omega_{2n}$ be an invertible Hermitian matrix and $\tilde{B}\in\Omega_{2n}$ an $\tilde{H}$-selfadjoint matrix with a spectrum consisting of only negative real numbers. Then, for $m$ odd, there exists an $\tilde{H}$-selfadjoint matrix in $\Omega_{2n}$, say $\tilde{A}$, such that $\tilde{A}^m=\tilde{B}$. 
\end{theorem}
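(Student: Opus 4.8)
The plan is to mimic the positive-eigenvalue case (Theorem~\ref{ThmExistencePosEig}) almost verbatim, exploiting the fact that $m$ is odd so that every negative real number has a (unique) negative real $m$th root. By Lemma~\ref{LemBlockPerBlock} and Lemma~\ref{Lem2.1recall} it suffices to treat a single block of the canonical form of Theorem~\ref{ThmcanonformOmega}; since the spectrum is real, we may assume $\tilde{B}=J_n(\lambda)\oplus J_n(\lambda)\in\Omega_{2n}$ with $\lambda<0$, and $\tilde{H}=\varepsilon Q_n\oplus\varepsilon Q_n$ with $\varepsilon=\pm 1$. (Here $\omega_n\bigl(J_n(\lambda)\bigr)=J_n(\lambda)\oplus J_n(\bar\lambda)=J_n(\lambda)\oplus J_n(\lambda)$ because $\lambda$ is real, so the block genuinely lies in $\Omega_{2n}$, and correspondingly the sign $\varepsilon$ appears twice.)

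First I would let $\mu$ be the negative real $m$th root of $\lambda$ (this exists and is real precisely because $m$ is odd) and set $J=J_n(\mu)\oplus J_n(\mu)$. Then $J\in\Omega_{2n}$, both $J$ and $J^m=(J_n(\mu))^m\oplus (J_n(\mu))^m$ are $\tilde{H}$-selfadjoint (a single Jordan block at a real eigenvalue is $\varepsilon Q_n$-selfadjoint, and any power of it is too), and since $J_n(\mu)$ has the single eigenvalue $\mu\neq 0$, the matrix $(J_n(\mu))^m$ has the single eigenvalue $\mu^m=\lambda$ with one Jordan block of size $n$, so the Jordan normal form of $J^m$ equals $\tilde{B}$. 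Next I would construct an invertible $P=P_1\oplus P_2\in\Omega_{2n}$ with $P^{-1}J^mP=\tilde{B}$ and $P^*\tilde{H}P=\tilde{H}$, exactly as in Theorem~\ref{ThmExistencePosEig}: take
\begin{equation*}
P_1=\begin{bmatrix}\left((J_n(\mu))^m-\lambda I\right)^{n-1}y & \cdots & \left((J_n(\mu))^m-\lambda I\right)y & y\end{bmatrix},\qquad P_2=\bar{P}_1,
\end{equation*}
with $y\in\mathrm{Ker}\left((J_n(\mu))^m-\lambda I\right)^n=\mathbb{C}^n$ but $y\notin\mathrm{Ker}\left((J_n(\mu))^m-\lambda I\right)^{n-1}$, and $z=\bar y$ so that $P\in\Omega_{2n}$. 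The choice of $y$ makes $P_1$ invertible and realizes the similarity $P_1^{-1}(J_n(\mu))^mP_1=J_n(\lambda)$; the condition $P^*\tilde{H}P=\tilde{H}$ reduces to $P_1^*Q_nP_1=Q_n$, which (since $P_1^*Q_nP_1$ is a lower anti-triangular Hankel matrix) amounts to the scalar equations $y^*Q_n\left((J_n(\mu))^m-\lambda I\right)^{n-j}y=\delta_{j1}$ for $j=1,\dots,n$, solvable by taking $y$ real as in \cite{mthroots}. Finally, Lemma~\ref{Lem2.1recall} gives that $\tilde{A}:=P^{-1}JP\in\Omega_{2n}$ is an $\tilde{H}$-selfadjoint $m$th root of $\tilde{B}$.

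I do not expect a genuine obstacle here: the only place the sign of $\lambda$ mattered in the positive case was the existence of a real $m$th root of $\lambda$, and oddness of $m$ restores exactly that, while the sign $\varepsilon$ is carried along passively. The one point to check carefully is that restricting to $z=\bar y$ (forcing $P\in\Omega_{2n}$) does not destroy the solvability of the Hankel equations — but since those equations involve only $P_1$ and $y$ can be chosen real, $z=\bar y=y$ causes no conflict, so this is routine rather than hard.
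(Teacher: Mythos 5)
Your proposal is correct and follows essentially the same route as the paper: the paper also reduces to a single block $\tilde{B}=J_n(\lambda)\oplus J_n(\lambda)$, $\tilde{H}=\varepsilon Q_n\oplus\varepsilon Q_n$, takes $\mu$ to be the real $m$th root of $\lambda$ (available since $m$ is odd), sets $J=J_n(\mu)\oplus J_n(\mu)$, and constructs $P\in\Omega_{2n}$ exactly as in Theorem~\ref{ThmExistencePosEig} before invoking Lemma~\ref{Lem2.1recall}. Your explicit check that the choice $z=\bar y$ with $y$ real keeps $P$ in $\Omega_{2n}$ without affecting solvability is precisely the point the paper leaves implicit in its ``similarly to Theorem~\ref{ThmExistencePosEig}'' remark.
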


\begin{proof}
Let $\tilde{B}\in\Omega_{2n}$ be $\tilde{H}$-selfadjoint with only negative real eigenvalues, where $\tilde{H}\in\Omega_{2n}$ is invertible and Hermitian, and let $m$ be odd. Assume that $\tilde{B}=J_n(\lambda)\oplus J_n(\lambda)$ and $\tilde{H}=\varepsilon Q_n\oplus\varepsilon Q_n$, where $\lambda<0$ and $\varepsilon=\pm 1$. Since $m$ is odd, we can take $\mu$ to be the real $m$th root of $\lambda$ and let $J=J_n(\mu)\oplus J_n(\mu)$. Then the Jordan normal form of $J^m$ is equal to $\tilde{B}$, and both $J$ and $J^m$ are $\tilde{H}$-selfadjoint. Similarly to Theorem~\ref{ThmExistencePosEig}, we can construct an invertible matrix $P\in\Omega_{2n}$ such that the equations
\begin{equation*}
P^{-1}J^mP=\tilde{B}\quad\textup{and}\quad P^*\tilde{H}P=\tilde{H}
\end{equation*}
hold. Therefore, from Lemma~\ref{Lem2.1recall}, the matrix $\tilde{A}:=P^{-1}JP\in\Omega_{2n}$ is an $\tilde{H}$-selfadjoint $m$th root of $\tilde{B}$.
\end{proof}

In the case where the matrix $\tilde{B}$ has only the number zero as an eigenvalue, we instantly notice a necessary condition for the existence of an $m$th root $\tilde{A}$ of $\tilde{B}$. Note that $\tilde{A}$ is not necessarily $\tilde{H}$-selfadjoint. 
Since each number in the Segre characteristic of  $\tilde{A}$  corresponding to the zero eigenvalue occurs twice (see Corollary~\ref{CorDoubleNrinSegre}) and from the way $m$th roots are formed for nilpotent matrices, see for example \cite{BorweinR,mthroots}, the $m$-tuples in the Segre characteristic (or some reordering thereof) corresponding to the zero eigenvalue of $\tilde{B}$ have to exist in pairs, i.e.\ there are two of each $m$-tuple. For example, with $m=4$, if the nonzero part of the Segre characteristic of a nilpotent matrix $\tilde{B}\in\Omega_{20}$ is $(3,3,2,2),(3,3,2,2)$, then the nonzero part of the Segre characteristic of any $m$th root $\tilde{A}\in\Omega_{20}$ is $(10,10)$. 

Here we need another result from \cite{mthroots} which was obtained by applying $\omega_n$ to all matrices:
\begin{lemma}\label{Lem2.6recall}
Let $A$ be equal to $J_n(0)\oplus J_n(0)$. Then $A^m$ has Jordan normal form 
\begin{equation*}
\bigoplus_{i=1}^r J_{a+1}(0)\oplus\bigoplus_{i=1}^{m-r} J_a(0)\oplus\bigoplus_{i=1}^r J_{a+1}(0)\oplus\bigoplus_{i=1}^{m-r} J_a(0),
\end{equation*}
where $n=am+r$, for $a,r\in\mathbb{Z}$, $0<r\leq m$.
\end{lemma}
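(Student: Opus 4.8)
The plan is to analyze the action of taking $m$th powers on a nilpotent Jordan block, reduced to the single-block situation by the direct-sum structure. First I would recall the standard fact (used implicitly in \cite{BorweinR,mthroots} for the complex case) that if $N = J_n(0)$ is a single nilpotent Jordan block, then $N^m$ is similar to a direct sum of Jordan blocks whose sizes are determined by the Euclidean division $n = am + r$ with $0 < r \le m$: namely $r$ blocks of size $a+1$ and $m-r$ blocks of size $a$. This is most cleanly seen by computing the ranks $\operatorname{rank}(N^{m})^k = \operatorname{rank} N^{mk} = \max(n - mk, 0)$ and then reading off the Segre characteristic of $N^m$ from the successive differences $\dim \ker (N^m)^{k+1} - \dim \ker(N^m)^k$; these differences are exactly the number of Jordan blocks of $N^m$ of size $> k$, and a short arithmetic check with $n = am + r$ gives the claimed partition. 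Equivalently one can exhibit the cyclic-subspace decomposition of $\complex^n$ under $N^m$ directly: the vectors $e_n, e_{n-1}, \dots$ split into $m$ chains according to index modulo $m$, and the chain lengths are $a+1$ (for the $r$ residue classes that are "long") and $a$ (for the remaining $m-r$).

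Next I would assemble the two-block statement. Since $A = J_n(0) \oplus J_n(0)$, we have $A^m = N^m \oplus N^m$ with $N = J_n(0)$, so the Jordan normal form of $A^m$ is simply two copies of the Jordan normal form of $N^m$ computed in the previous step. Writing out $N^m \sim \bigoplus_{i=1}^r J_{a+1}(0) \oplus \bigoplus_{i=1}^{m-r} J_a(0)$ and taking the direct sum with itself yields exactly the displayed expression in the lemma. One should remark for consistency that this is compatible with Corollary~\ref{CorDoubleNrinSegre}: every block size in the Segre characteristic of $A^m$ occurs an even number of times, as it must for a matrix in $\Omega_{2n}$ (here $A = \omega_n(J_n(0))$ up to the identification $\omega_n(J_n(0)) = J_n(0) \oplus J_n(0)$).

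I expect the only genuinely delicate point to be the bookkeeping in the rank/Segre computation — specifically verifying that with $n = am + r$ and $0 < r \le m$ one gets precisely $r$ blocks of size $a+1$ and $m - r$ of size $a$ (and not an off-by-one error at the boundary cases $r = m$ or $a = 0$). This is handled by the clean formula $\dim\ker(N^m)^k = n - \max(n-mk,0) = \min(mk, n)$, whose successive differences are $m$ for $k \le a$ and then $r$ at $k = a+1$ (when $r < m$; when $r = m$ one instead has $a' = a+1$, $r' $ absorbed, consistent with the convention $0 < r \le m$), and $0$ afterwards; reading the conjugate partition then gives the block sizes. Everything else is routine linear algebra, and since the statement and all matrices involved are obtained from the complex case of \cite{mthroots} by applying the algebra isomorphism $\omega_n$, no new quaternionic subtlety arises.
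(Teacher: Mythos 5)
Your proposal is correct, but it is more self-contained than what the paper actually does: the paper gives no argument for this lemma at all, simply recalling the corresponding single-block statement from \cite{mthroots} (that $J_n(0)^m$ has Jordan form $\bigoplus_{i=1}^r J_{a+1}(0)\oplus\bigoplus_{i=1}^{m-r}J_a(0)$ with $n=am+r$) and transporting it through the isomorphism, using $\omega_n(J_n(0))=J_n(0)\oplus J_n(0)$ and the multiplicativity of $\omega_n$ to double every block. Your route instead proves the single-block fact from first principles via $\dim\ker\bigl((J_n(0)^m)^k\bigr)=\min(mk,n)$, whose successive differences count the Jordan blocks of $J_n(0)^m$ of size at least $k$ ($m$ blocks for $k\le a$, $r$ blocks at $k=a+1$, none beyond), and then observes $A^m=J_n(0)^m\oplus J_n(0)^m$ to get two copies; the boundary cases $r=m$ and $a=0$ are in fact handled uniformly by the same formula, so your parenthetical worry there is harmless but unnecessary. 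What the paper's citation-based route buys is brevity and consistency with how all the other auxiliary facts in Section 3 are imported from the complex case; what your computation buys is independence from \cite{mthroots} and an explicit verification of the block count, and your closing remark that the doubled Segre characteristic is consistent with Corollary~\ref{CorDoubleNrinSegre} is a nice sanity check that the paper leaves implicit.
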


We now give the result for this last case.

\begin{theorem}\label{ThmExistenceZeroEig}
Let $\tilde{H}\in\Omega_{2n}$ be an invertible Hermitian matrix and $\tilde{B}\in\Omega_{2n}$ an $\tilde{H}$-selfadjoint matrix with a spectrum consisting of only the number zero. Then there exists an $\tilde{H}$-selfadjoint matrix in $\Omega_{2n}$, say $\tilde{A}$, such that $\tilde{A}^m=\tilde{B}$ if and only if the following properties hold:
\begin{enumerate}
\item[\rm 1.] There exists a reordering of the Segre characteristic of $\tilde{B}$ such that each $m$-tuple occurs twice and the difference between any two numbers in each $m$-tuple is at most one.
\item[\rm 2.] By using a reordering satisfying the first property, the canonical form of $(\tilde{B},\tilde{H})$ is given by $(J_B\oplus J_B,H_B\oplus H_B)$ where
\begin{equation*}
J_B=\bigoplus_{j=1}^t\left(\bigoplus_{i=1}^{r_j}J_{a_{j}+1}(0)\oplus\bigoplus_{i=r_j+1}^{m}J_{a_j}(0)\right),
\end{equation*}
and
\begin{equation}\label{eqZeroCanonform}
H_B=\bigoplus_{j=1}^t\left(\bigoplus_{i=1}^{r_j}\varepsilon_i^{(j)}Q_{a_{j}+1}\oplus\bigoplus_{i=r_j+1}^{m}\varepsilon_{i}^{(j)}Q_{a_j}\right),
\end{equation}
for some $a_j,r_j\in\mathbb{Z}$ with $0<r_j\leq m$, and the signs are as follows, given in terms of $\eta_j$, where $\eta_j$ could be either $1$ or $-1$: If $r_j$ (respectively $m-r_j$) is even, half of the $\varepsilon_i^{(j)}$ for $i=1,\ldots,r_j$ (respectively for $i=r_j+1,\ldots,m$) are equal to $\eta_j$ and the other half are equal to $-\eta_j$. If $r_j$ (respectively $m-r_j$) is odd, there is one more of the $\varepsilon_i^{(j)}$ for $i=1,\ldots,r_j$ (respectively for $i=r_j+1,\ldots,m$) equal to $\eta_j$ than to $-\eta_j$.
\end{enumerate} 
\end{theorem}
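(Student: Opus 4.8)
The plan is to reduce the nilpotent case to the corresponding complex result from \cite{mthroots} while keeping careful track of the $\Omega_{2n}$-structure, using Corollary~\ref{CorDoubleNrinSegre} and Lemma~\ref{Lem2.6recall} as the bridge. By Lemma~\ref{Lem2.1recall} I may assume $(\tilde{B},\tilde{H})$ is in canonical form as in Theorem~\ref{ThmcanonformOmega}, and since every block there appears in a matched pair, I can write the canonical form as $(\tilde{B},\tilde{H}) = (J_B\oplus J_B, H_B\oplus H_B)$ for some nilpotent $J_B$ and some invertible Hermitian $H_B = \bigoplus_j \bigoplus_i \varepsilon_i^{(j)} Q_{(\cdot)}$. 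For the necessity direction, suppose an $\tilde{H}$-selfadjoint $m$th root $\tilde{A}\in\Omega_{2n}$ exists. Since $\tilde{A}$ is nilpotent and in $\Omega_{2n}$, Corollary~\ref{CorDoubleNrinSegre} gives that each number in its Segre characteristic occurs twice, so the canonical form of $(\tilde{A},\tilde{H})$ is $(J_A\oplus J_A, H_A\oplus H_A)$. Then $\tilde{A}^m=\tilde{B}$ forces, block by block via Lemma~\ref{Lem2.6recall}, that each Jordan block $J_n(0)\oplus J_n(0)$ of $\tilde{A}$ produces exactly the pattern $\bigoplus_{i=1}^{r}J_{a+1}(0)\oplus\bigoplus_{i=1}^{m-r}J_a(0)$ (twice) in $\tilde{B}$, with $n=am+r$; this is precisely property~1, and the doubling is automatic. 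For property~2 (the sign pattern), I invoke the complex theorem from \cite{mthroots}: applying $\omega_n^{-1}$ formally — or rather, simply recognizing that $(\tilde{B},\tilde{H})$ as a complex pair in $\mathbb{C}^{2n\times 2n}$ must satisfy the complex existence conditions from \cite{mthroots} — pins down the signs $\varepsilon_i^{(j)}$ relative to a single $\eta_j$ exactly as stated.

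For the sufficiency direction, I would argue one $j$-block at a time, justified by Lemma~\ref{LemBlockPerBlock} and the permutation matrix \eqref{eqPermutationMatrix}. Fix $j$ and set $n = a_j m + r_j$. The natural candidate root is $J := J_n(0)\oplus J_n(0)\in\Omega_{2n}$; by Lemma~\ref{Lem2.6recall} its $m$th power $J^m$ has exactly the Jordan structure of the $j$-block of $\tilde{B}$. The matrix $J$ is $Q$-selfadjoint for a suitable sip-type form $Q\in\Omega_{2n}$, so $J^m$ is also $Q$-selfadjoint, and its canonical form will have \emph{some} sign pattern $(\hat\varepsilon_i)$ — the content of the complex theorem in \cite{mthroots} is that the sign pattern arising this way is exactly the balanced/almost-balanced pattern described in property~2 (with $\eta_j$ determined by the ambient sign of $Q$, and by varying $J$ over the two choices $Q_n(0)\oplus Q_n(0)$ and $-Q_n(0)\oplus-Q_n(0)$, or conjugating by an appropriate involution, one realizes both values of $\eta_j$). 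Since the hypothesized $(\tilde{B}_j,\tilde{H}_j)$ has exactly this canonical form, $(J^m, Q)$ is unitarily similar to $(\tilde{B}_j,\tilde{H}_j)$; crucially the similarity $P$ can be taken in $\Omega_{2n}$, by the same device used in Theorems~\ref{ThmExistencePosEig} and~\ref{ThmExistenceNonreal} (build $P=P_1\oplus P_1$ or $P_1\oplus\bar{P}_1$ from Krylov-type columns and choose the generating vector so as to stay in the subalgebra). Then Lemma~\ref{Lem2.1recall} yields $\tilde{A}_j := P^{-1}JP\in\Omega_{2n}$ as the desired $\tilde{H}_j$-selfadjoint $m$th root, and reassembling the blocks via Lemma~\ref{LemBlockPerBlock} finishes the proof.

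The main obstacle I anticipate is the bookkeeping of the sign characteristic in property~2 and, in the sufficiency direction, verifying that the unitary similarity can genuinely be realized \emph{inside} $\Omega_{2n}$ rather than merely in $\mathbb{C}^{2n\times 2n}$ — this is exactly the subtlety flagged after the first lemma of Section~3, and the reason the complex result \cite{mthroots} cannot simply be quoted verbatim. Concretely, I need the congruence-normalizing vectors in the Krylov construction to be chosen real (or to respect the $A_1+\J A_2$ splitting) so that $P\in\Omega_{2n}$; the proof of Theorem~\ref{ThmExistencePosEig} shows this is routine for a single $Q_n$ block, but here each $H_B$-block is a direct sum $\bigoplus_i\varepsilon_i^{(j)}Q_{(\cdot)}$ with mixed signs and two different block sizes $a_j$ and $a_j+1$, so the reduction of $(J^m, Q)$ to canonical form requires a more careful (though still essentially linear-algebraic) argument, mirroring Lemma~\ref{Lem2.12recall} but for nilpotent blocks. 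A secondary, purely combinatorial point is confirming that the "difference at most one" condition in property~1 is equivalent to the Segre characteristic of $\tilde{B}$ being the $m$th-power image of some single-block-pair Segre characteristic, which is where $n=am+r$ with $0<r\le m$ does all the work.
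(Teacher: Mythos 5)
Your necessity argument is essentially the paper's: put $(\tilde{A},\tilde{H})$ in the canonical form of Theorem~\ref{ThmcanonformOmega} (which already gives the doubling, so Corollary~\ref{CorDoubleNrinSegre} is not really needed), raise to the $m$th power, apply Lemma~\ref{Lem2.6recall} together with the permutation \eqref{eqPermutationMatrix} to get property~1, and quote the sign part of Theorem~2.5 of \cite{mthroots} to get property~2. That half is fine.

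The sufficiency direction, however, contains a genuine gap, and it is exactly the step you yourself flag as the ``main obstacle'': showing that the similarity taking $(J^m,Q)$ to $(\tilde{B},\tilde{H})$ can be chosen \emph{inside} $\Omega_{2n}$. You propose to do this by a Krylov-type construction with real (or structure-respecting) generating vectors, ``mirroring Lemma~\ref{Lem2.12recall} but for nilpotent blocks,'' and you correctly observe that with mixed signs $\varepsilon_i^{(j)}$ and two block sizes $a_j,a_j+1$ this is not routine -- but you never carry it out, so the proof is incomplete at its crucial point. The paper avoids this difficulty entirely by exploiting the doubled, real canonical form: write $\tilde{B}=B_1\oplus B_1$, $\tilde{H}=H_1\oplus H_1$, take $J=\bigoplus_j J_{t_j}(0)\oplus\bigoplus_j J_{t_j}(0)$ and $Q=\bigoplus_j\eta_jQ_{t_j}\oplus\bigoplus_j\eta_jQ_{t_j}$ with $t_j=a_jm+r_j$, and apply the \emph{complex} Theorem~2.5 of \cite{mthroots} only to the first half, obtaining some complex invertible $P_1$ with $P_1^{-1}\bigl(\bigoplus_j J_{t_j}(0)\bigr)^mP_1=B_1$ and $P_1^*\bigl(\bigoplus_j\eta_jQ_{t_j}\bigr)P_1=H_1$. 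Then $P:=P_1\oplus\bar{P}_1$ lies in $\Omega_{2n}$ automatically (it is $\omega_n$ of the quaternion matrix with complex entries $P_1$), and since $B_1$, $H_1$ and the blocks of $J^m$, $Q$ are real, conjugating the second half by $\bar{P}_1$ again produces $B_1$ and $H_1$; hence $P^{-1}J^mP=\tilde{B}$ and $P^*QP=\tilde{H}$, and Lemma~\ref{Lem2.1recall} finishes. No choice of special generating vectors, no nilpotent analogue of Lemma~\ref{Lem2.12recall}, and no block-by-block reassembly via Lemma~\ref{LemBlockPerBlock} is needed for this step -- the whole sum over $j$ is handled at once. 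Incorporating this device would close your gap; as written, the sufficiency half remains a plan rather than a proof.
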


\begin{proof}
Let $\tilde{B}\in\Omega_{2n}$ be $\tilde{H}$-selfadjoint with only zero in its spectrum, where $\tilde{H}\in\Omega_{2n}$ is invertible and Hermitian. Assume there exists an $\tilde{H}$-selfadjoint $m$th root $\tilde{A}\in\Omega_{2n}$, i.e.\ $\tilde{A}^m=\tilde{B}$. From Theorem~\ref{ThmcanonformOmega} there exists an invertible matrix $S\in\Omega_{2n}$ such that the canonical form of $(\tilde{A},\tilde{H})$ is given by 
\begin{equation*}
S^{-1}\tilde{A}S=\bigoplus_{j=1}^tJ_{k_j}(0)\oplus \bigoplus_{j=1}^tJ_{k_j}(0)\quad\textup{and}\quad S^*\tilde{H}S=\bigoplus_{j=1}^t\eta_jQ_{k_j}\oplus \bigoplus_{j=1}^t\eta_jQ_{k_j},
\end{equation*}
for some $t$, $k_j$ and signs $\eta_j=\pm1$. Consider
\begin{equation*}
S^{-1}\tilde{B}S=(S^{-1}\tilde{A}S)^m=\bigoplus_{j=1}^t(J_{k_j}(0))^m\oplus \bigoplus_{j=1}^t(J_{k_j}(0))^m.
\end{equation*}
Therefore by using Lemma~\ref{Lem2.6recall} and the permutation matrix defined in \eqref{eqPermutationMatrix}, the matrix $\tilde{B}$ has Jordan normal form
\begin{equation}\label{eqZeroproof1}
\bigoplus_{j=1}^t\left(\bigoplus_{i=1}^{r_j}J_{a_{j}+1}(0)\oplus\bigoplus_{i=1}^{m-r_j}J_{a_j}(0)\right)\oplus\bigoplus_{j=1}^t\left(\bigoplus_{i=1}^{r_j}J_{a_{j}+1}(0)\oplus\bigoplus_{i=1}^{m-r_j}J_{a_j}(0)\right),
\end{equation}
where $k_j=a_jm+r_j$, $a_j,r_j\in\mathbb{Z}$ and $0<r_j\leq m$. Hence from \eqref{eqZeroproof1} we see that there exists a reordering of the Segre characteristic of $\tilde{B}$ in which each $m$-tuple occurs twice and where the difference of any two numbers in each $m$-tuple is at most one.

Since the Jordan normal form of $\tilde{B}$ is given in \eqref{eqZeroproof1}, by Theorem~\ref{ThmcanonformOmega} the corresponding matrix in the canonical form is given by $H_B\oplus H_B$ where
\begin{equation*}
H_B=\bigoplus_{j=1}^t\left(\bigoplus_{i=1}^{r_j}\varepsilon_i^{(j)}Q_{a_{j}+1}\oplus\bigoplus_{i=r_j+1}^{m}\varepsilon_{i}^{(j)}Q_{a_j}\right),
\end{equation*}
for $\varepsilon_i^{(j)}=\pm1$, $i=1,\ldots,m$, $j=1,\ldots,t$. From the assumption that an $\tilde{H}$-selfadjoint $m$th root exists, we know that the two properties given in \cite[Theorem~2.5]{mthroots} hold.  The second property is used to find the signs of $H_B$. Thus it follows: if $r_j$ (respectively $m-r_j$) is even, half of the $\varepsilon_i^{(j)}$ for $i=1,\ldots,r_j$ (respectively for $i=r_j+1,\ldots,m$) are equal to $\eta_j$ and the other half are equal to $-\eta_j$. If $r_j$ (respectively $m-r_j$) is odd, there is one more of the $\varepsilon_i^{(j)}$ for $i=1,\ldots,r_j$ (respectively for $i=r_j+1,\ldots,m$) equal to $\eta_j$ than  to $-\eta_j$.

Conversely, let $\tilde{B}$ be an $\tilde{H}$-selfadjoint matrix and suppose that the two properties in the theorem hold. Assume from the first property that $\tilde{B}=B_1\oplus B_1\in\Omega_{2n}$ where
\begin{equation*}
B_1=\bigoplus_{j=1}^t\left(\bigoplus_{i=1}^{r_j}J_{a_{j}+1}(0)\oplus\bigoplus_{i=1}^{m-r_j}J_{a_j}(0)\right),
\end{equation*}
and assume that $\tilde{H}=H_1\oplus H_1$ where $H_1=H_B$ is as in \eqref{eqZeroCanonform}. Let 
\begin{equation*}
J=\bigoplus_{j=1}^tJ_{t_j}(0)\oplus \bigoplus_{j=1}^tJ_{t_j}(0),
\end{equation*}
where $t_j$ is the sum of the sizes of the blocks in $\tilde{B}$ which correspond to one $m$-tuple, i.e.\ $t_j=r_j(a_j+1)+(m-r_j)(a_j)=a_jm+r_j$. Thus by using Lemma~\ref{Lem2.6recall} and the permutation matrix defined by \eqref{eqPermutationMatrix}, the Jordan normal form of $J^m$ is equal to $\tilde{B}$. Also, let $Q=\bigoplus_{j=1}^t\varepsilon_{j} Q_{t_j}\oplus \bigoplus_{j=1}^t\varepsilon_j Q_{t_j}$ where $\varepsilon_j=\eta_j$ is obtained from the signs of $H_B$, then $J$ is $Q$-selfadjoint. According to  \cite[Theorem~2.5]{mthroots} there exists an invertible matrix $P_1$ such that
\begin{equation*}
P_1^{-1}\Big(\bigoplus_{j=1}^tJ_{t_j}(0)\Big)^mP_1=B_1 \quad\textup{and}\quad P_1^*\Big(\bigoplus_{j=1}^t\varepsilon_jQ_{t_j}\Big)P_1=H_1.
\end{equation*}
Let $P=P_1\oplus \bar{P}_1$, then $P$ is an invertible matrix in $\Omega_{2n}$ and the equations
\begin{equation*}
P^{-1}J^mP=\tilde{B}\quad \textup{and}\quad P^*QP=\tilde{H}
\end{equation*}
hold. From Lemma~\ref{Lem2.1recall} the matrix $\tilde{A}:=P^{-1}JP\in\Omega_{2n}$ is an $\tilde{H}$-selfadjoint $m$th root of $\tilde{B}$.
\end{proof}

\bigskip
{\bf Acknowledgments.} This work is based on research supported in part by the DSI-NRF Centre of Excellence in Mathematical and Statistical Sciences (CoE-MaSS). Opinions expressed and conclusions arrived at are those of the authors and are not necessarily to be attributed to the CoE-MaSS.



\begin{thebibliography}{1}
\bibitem{Alpay}
D.~Alpay, A.C.M.~Ran and L.~Rodman. 
\newblock  Basic classes of matrices with respect to quaternionic indefinite inner product spaces.
\newblock  {\it Linear Algebra Appl.,} 416:242--269, 2006.

\bibitem{BMRRR} 
{Y.~Bolshakov, C.V.M.~van~der~Mee, A.C.M.~Ran, B.~Reichstein and L.~Rodman}. 
\newblock  {Polar decompositions in finite dimensional indefinite scalar product spaces: General theory}.
\newblock  {\it Linear Algebra Appl.},  216:91--141, 1997.

\bibitem{BorweinR}
{J.M.~Borwein and B.~Richmond}.
\newblock  {How many matrices have roots?}
\newblock  {\it Canad. J. Math.}, 36(2):286--299, 1984.

\bibitem{Djokovic}
D.Z.~Djokovic, J.~Patera, P.~Winternitz and H.~Zassenhaus.
\newblock  Normal forms of elements of classical real and complex Lie and Jordan algebras.
\newblock  {\it J. Math. Phys.}, 24:1363--1374, 1983.

\bibitem{mthroots}
{G.J.~Groenewald, D.B.~Janse~van~Rensburg, A.C.M.~Ran, F.~Theron and M.~van~Straaten.}
\newblock  {$m$th roots of $H$-selfadjoint matrices}.
\newblock  \emph{Linear Algebra Appl.,} 610:804--826, 2021.

\bibitem{Higham}
{N.J.~Higham}.
\newblock  {\it Functions of Matrices: Theory and Computation}.
\newblock  SIAM, Philadelphia, 2008.

\bibitem{Jafari}
M.~Jafari, H.~Mortazaasl and Y.~Yayli.
\newblock  De-Moivre's formula for matrices of quaternions.
\newblock  {\it JP J. Algebra Number Theory Appl.,} 21:57--67, 2011.

\bibitem{CT}
D.B.~Janse~van~Rensburg, M.~van~Straaten, F.~Theron and C.~Trunk.
\newblock  Square roots of $H$-nonnegative matrices. 
\newblock  \emph{Linear Algebra Appl.,} 621:29--49, 2021.

\bibitem{Karow}
{M.~Karow.}
\newblock  Self-adjoint operators and pairs of Hermitian forms over the quaternions.
\newblock  {\it Linear Algebra Appl.,}  299:101--117, 1999.

\bibitem{Rodman}
{L.~Rodman.}
\newblock  {\it Topics in Quaternion Linear Algebra}.
\newblock  {Princeton University Press,} Princeton, 2014.

\bibitem{Sergeichuk}
V.V.~Sergeichuk.
\newblock  Classification of sesquilinear forms, pairs of Hermitian forms, self-conjugate and isometric operators over the division ring of quaternions.
\newblock  {\it Math. Notes}, 49:404--414, 1991  (Translation from Russian).

\bibitem{Shapiro}
{H.~Shapiro}.
\newblock {The Weyr characteristic}.
\newblock {\it Amer.  Math. Monthly}, 106:919--929, 1999.

\bibitem{Zhang}
F.~Zhang.
\newblock  Quaternions and Matrices of Quaternions.
\newblock  {\it Linear Algebra Appl.,} 251:21--57, 1997.

\bibitem{ZhangWei}
F.~Zhang and Y.~Wei.
\newblock  Jordan canonical form of a partitioned complex matrix and its application to real quaternion matrices.
\newblock  {\it Comm. Algebra,} 29:2363--2375, 2001.


\end{thebibliography}
\end{document}